\documentclass{svmult}

\usepackage{helvet}         
\usepackage{courier}        
\usepackage{type1cm}        
\usepackage{makeidx}         
\usepackage{graphicx}        
\usepackage{multicol}        
\usepackage[bottom]{footmisc}

\makeindex             

\usepackage{amsmath,amssymb}
\usepackage{amsrefs}
\usepackage{hyperref}

\newcommand{\MRnumber}[1]{\href{http://www.ams.org/mathscinet-getitem?mr=#1}{\mbox{\bf~MR~#1}}}

\newcommand{\floor}[1]{\ensuremath{\lfloor #1 \rfloor}}

\newcommand{\ceiling}[1]{\ensuremath{\lceil #1 \rceil}}
\newcommand{\fracpart}[1]{\ensuremath{\{ #1 \}}}

\newcommand{\NN}{\ensuremath{{\mathbb N}}}

\newcommand{\RR}{\ensuremath{{\mathbb R}}}
\newcommand{\ZZ}{\ensuremath{{\mathbb Z}}}
\newcommand{\QQ}{\ensuremath{{\mathbb Q}}}

\newcommand{\al}{\alpha}

\newcommand{\len}{{\text{len}}}

\begin{document}
\title*{Can You Hear the Shape of a Beatty Sequence?}
\author{Ron Graham, Kevin O'Bryant}
\institute{Ron Graham at University of California, San Diego, \href{mailto:graham@ucsd.edu}{graham@ucsd.edu}
    \and Kevin O'Bryant at The City University of New York, College of Staten Island and Graduate Center,
        \href{mailto:kevin@member.ams.org}{kevin@member.ams.org}.
        This work was supported (in part) by a grant from The City University of New York PSC-CUNY Research Award Program.}
\maketitle

\abstract{
Let $K(x_1,\dots,x_d)$ be a polynomial. If you are not given the real numbers
$\al_1,\al_2,\dots,\al_d$, but are given the polynomial $K$ and the sequence
$a_n=K(\floor{n\al_1},\floor{n\al_2},\dots,\floor{n\al_d})$, can you deduce the values of
$\al_i$? Not, it turns out, in general. But with additional irrationality hypotheses and certain
polynomials, it is possible. We also consider the problem of deducing $\al_i$ from the integer sequence $(\floor{\floor{\cdots \floor{\floor{n\al_1}\al_2}\cdots \al_{d-1}}\al_d})_{n=1}^\infty$.
}

\section{Introduction}
If you are given a sequence of integers $(a_n)_{n=1}^\infty$ and told that the sequence was
generated by the formula $a_n=\floor{n\al_1}\floor{n\al_2}$ for some real numbers
$\al_1,\al_2$, is it possible to determine $\al_1$ and $\al_2$? In other words, what
are the solutions $(\al_1,\al_2,{\beta}_1,{\beta}_2)$ to the infinite system of equations
    \[ \floor{n\al_1}\floor{n\al_2} =\floor{n{\beta}_1}\floor{n{\beta}_2}\qquad (n\in \NN)?\]

A {\em generalized polynomial} is defined to be any formula built up from the unknowns $x_1,x_2,\dots$, the real numbers, and the operations of addition, multiplication, and the floor function. These have arisen recently in ergodic theory (e.g., \cites{MR2318563,MR2246589,MR1292518}), particularly in connection with rotations on nilmanifolds.

The first problem we are concerned with is, given a sequence $(a_n)_{n=1}^\infty$ of integers and a generalized polynomial $G(\bar{x})$, to describe the set of $\bar{\al}\in\RR^d$ such that
    \[\forall n\geq 1, \qquad G({n\bar\al})=a_n.\]
A few examples will help to clarify the difficulty in dealing with generalized polynomials. First, we note that to determine real numbers from an integer sequence, we must use the tail of the sequence, i.e., limits must be involved in some form. As a first example, consider the sequence $a_n=n-1$  and the generalized polynomial $G(\bar x)=\floor{x_1}+\floor{x_2}$. For {\em any} irrational $\al_1$ and $\al_2=1-\al_1$, we have \(G(n\al_1,n\al_2)=a_n\)for all positive integers $n$. Another curious example is  given by $G(x_1,x_2,n) = \floor{\floor{n\,x_1}\,x_2}$, which satisfies (among very many other sporadic relations)
    \[\forall n \in\ZZ, \qquad G(3/7,2/9,n) = G(1/3,2/7,n).\]
I.\ H{\aa}land Knutson [personal communication] notes that
    \[G(n)=\floor{\floor{\sqrt2 n}2\sqrt2 n}-\floor{\sqrt2 n}^2 -2n^2+1 = \left\{
                                                                            \begin{array}{ll}
                                                                              1, & \hbox{$n=0$;} \\
                                                                              0, & \hbox{$n\in \ZZ \setminus\{0\}$.}
                                                                            \end{array}
                                                                          \right.
    \]
In this work, we restrict ourselves to generalized polynomials with a particular structure.

Specifically, let $K(\bar x)$ be a (classical) polynomial, and set $a_n=K(\floor{n \bar \beta})$ (the floor function applied to each component of the vector $\bar \beta$) for some `sufficiently' irrational $\bar \beta$. We attempt to find all nontrivial solutions to the system of equations
    \[\forall n \geq 1, \qquad K(\floor{n \al}) = a_n.\]
With varying success we treat linear polynomials $x_1+\cdots+x_d$, sums of powers $x_1^r+\cdots+x_d^r$, and monomials $x_1\cdots x_d$, and other shapes.

The second problem we address is, given $d$ and a sequence $(a_n)_{n=1}^\infty$ of integers, to find all solutions to the infinite system of equations
    \[\floor{\floor{\cdots \floor{\floor{n\al_1}\al_2} \cdots \al_{d-1}}\al_d} = a_n.\]

We were motivated by two problems\footnote{It is plausible that their origins were in signal analysis. Consider a linear signal $(\al t + \gamma)_{t\in\RR}$, that is measured at discrete times (replace $t\in\RR$ with $n\in \ZZ_{\ge0}$) and with finite precision (replace $\al n +\gamma$ with $\floor{\al n +\gamma}$). Given finitely many such measurements, how accurately can you estimate $\al$? It is not difficult to imagine a situation where several such signals are preprocessed algebraically into a single signal, and yet one still wishes to discern the original signals.} given in ``Concrete Mathematics''~\cite{CM}:
    \begin{quote}
    {\bf Comment to Bonus Problem 3.49:} Find a necessary and sufficient condition on the real numbers
    $0\le \al<1$ and $0\le {\beta}<1$
    such that we can determine the unordered pair $\{\al,{\beta}\}$ from the infinite multiset of values
    \(\big\{\floor{n\al}+\floor{n{\beta}} \mid n>0\big\}\).

    {\bf Research Problem 3.50:} Find a necessary and sufficient condition on the nonnegative real
    numbers $\al$ and ${\beta}$ such that we can determine $\al$ and ${\beta}$ from the infinite
    multiset of values \(\big\{ \floor{\floor{n\al}{\beta}} \big\}\).
    \end{quote}
A partial solution to the first problem (with the additional assumption that $1,\al,\beta$ are linearly independent over $\QQ$) has recently been published~\cite{Rasmussen}, and \cite{CM} itself credits a sufficient condition for the second problem to unpublished notes of William A.\ Veech. We provide partial answers to generalizations of both problems.

To state our theorems, it is convenient to first introduce some notation. For a vector of reals $\bar{\zeta}=\langle \zeta_1,\dots,\zeta_d \rangle$, we define the fractional part $\fracpart{\bar{\zeta}}=\langle \fracpart{\zeta_1},\dots,\fracpart{\zeta_d}\rangle$ (this paper contains no sets of vectors!) and floor $\floor{\bar{\zeta}}=\langle \floor{\zeta_1},\dots,\floor{\zeta_d}\rangle$. Also, inequalities such as $\bar{\zeta}\ge0$ are to be understood componentwise, i.e., $\zeta_1\ge0, \dots,\zeta_d\ge 0$. We say that $\bar{\zeta}$ is rational if there is a nonzero vector of integers $\bar{c}$ such that the dot product $\bar{c}\cdot \bar{\zeta}$ is an integer, and otherwise say that $\bar{\zeta}$ is irrational. For a polynomial $K(x_1,\dots,x_d)$, the expression $K(\bar{\zeta})$ is defined to be $K(\zeta_1,\dots,\zeta_d)$. Also, $\sum \bar{\zeta}=\zeta_1+\cdots+\zeta_d$.

Let $\bar\zeta,\bar\eta\in\ZZ^d$ both sum to 0, and let $\sigma$ be a permutation of $1,2\dots,d$. Let ${\beta}_i=\al_{\sigma(i)}+\zeta_i$ and $\delta_i=\gamma_{\sigma(i)}+\eta_i$. Then trivially
    \[\floor{n\al_1+\gamma_1}+\dots+\floor{n\al_d+\gamma_d} =\floor{n{\beta}_1+\delta_1}+\dots+\floor{n{\beta}_d+\delta_d}  \]
for all $n$. Our first theorem states that this is the only type of solution that is possible when $\bar{\al}$ is irrational. It is plausible and consistent with our experiments that the phrase ``$\bar\al$ is irrational'' could be weakened to ``$\alpha_i+\alpha_j$ is not an integer for any $i,j$''.

\begin{theorem}\label{S11}
Let $K(x_1,\dots,x_d)=x_1+\dots+x_d$, and $\bar\al,\bar\gamma,\bar {\beta},\bar\delta\in\RR^d$. If
    \[\forall n\ge 1, \qquad K(\floor{n\bar{\al}+\bar{\gamma}})=K(\floor{n\bar{{\beta}}+\bar{\delta}}),\]
then either $\bar \al$ is rational, or there are lattice points $\bar \zeta,\bar\eta\in\ZZ^d$ and a permutation $\sigma$ of $1,2\dots,d$ with ${\beta}_i=\al_{\sigma(i)}+\zeta_i$, $\delta_i=\gamma_{\sigma(i)}+\eta_i$, and $\sum \bar \zeta=\sum\bar\eta=0$.
\end{theorem}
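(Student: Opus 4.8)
The plan is to strip off the linear growth and then do harmonic analysis on the bounded remainder. Writing $\floor{t}=t-\fracpart{t}$ and summing the hypothesis over the $d$ coordinates gives, for every $n\ge1$,
$$n\Big(\sum\bar\al-\sum\bar{\beta}\Big)+\Big(\sum\bar\gamma-\sum\bar\delta\Big)=\sum_{i=1}^d\fracpart{n\al_i+\gamma_i}-\sum_{i=1}^d\fracpart{n{\beta}_i+\delta_i}.$$
The right side lies in $[-d,d]$ while the left is affine in $n$, so the coefficient of $n$ must vanish: $\sum\bar\al=\sum\bar{\beta}$. Writing $c=\sum\bar\gamma-\sum\bar\delta$, I am reduced to the fractional-part identity $\sum_i\fracpart{n\al_i+\gamma_i}-\sum_i\fracpart{n{\beta}_i+\delta_i}=c$ holding for all $n$.

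Next I would promote this identity from the orbit to its closure so that honest $L^2$ Fourier analysis applies. Since $\bar\al$ is irrational, $1,\al_1,\dots,\al_d$ are linearly independent over $\QQ$, so by Weyl's equidistribution theorem the sequence $n\mapsto(n\bar\al+\bar\gamma,\,n\bar{\beta}+\bar\delta)\bmod 1$ equidistributes on a coset $W=(\bar\gamma,\bar\delta)+H$ of the closed subgroup $H\le\RR^{2d}/\ZZ^{2d}$ generated by $(\bar\al,\bar{\beta})$. The function $\psi(\bar x,\bar y)=\sum_i\fracpart{x_i}-\sum_i\fracpart{y_i}-c$ is bounded and Riemann integrable and vanishes along the orbit, so averaging $\psi^2$ and taking $n\to\infty$ gives $\int_W\psi^2\,d\mu_W=0$; hence $\psi=0$ almost everywhere on $W$. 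On $W$ I may therefore expand each sawtooth by its Fourier series $\fracpart{t}=\frac12-\sum_{k\ne0}e^{2\pi ikt}/(2\pi ik)$ and compare coefficients of the characters of $H$, all sums converging in $L^2(W)$.

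The heart of the matter is this character comparison. Because $1,\al_1,\dots,\al_d$ are independent, the frequencies $k\al_i$ $(k\ne0)$ restrict to pairwise distinct nontrivial characters of $H$, each such frequency is irrational, and the coefficient attached to $k\al_i$ has modulus exactly $1/(2\pi|k|)$; thus the $2d$ fundamentals $\pm\al_j$ carry the maximal modulus $1/(2\pi)$. Matching forces the $\bar y$-side to reproduce this spectrum exactly, so for each $j$ the pairs $(i,k)$ with $k{\beta}_i\equiv\al_j$ must contribute total modulus $1/(2\pi)$, and every purely rational frequency of the $\bar y$-side must cancel. I expect this no-cancellation step to be the main obstacle: a priori several $\bar y$-harmonics could build up modulus $1/(2\pi)$ through interference. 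To defeat this I would argue by energy. The $\bar x$-side carries total nonconstant $L^2$ mass $\sum_{i,k\ne0}1/(4\pi^2k^2)=d/12$, which forces the $\bar y$-side to carry the same mass entirely on the $\al$-frequencies; combined with the facts that each irrational ${\beta}_i$ meets a given frequency in at most one harmonic and that rational ${\beta}_i$ live only at rational frequencies, a counting argument shows there must be $d$ independent irrational ${\beta}_i$, i.e. $H$ projects onto the full $\bar y$-torus. Once the ${\beta}_i$ are jointly independent their harmonics are all distinct, and matching not merely the fundamentals but every harmonic $k\al_j$ with its phase pins down a unique ${\beta}_i\equiv\al_j\pmod1$ for each $j$ and fixes its orientation to be $+$, giving a permutation $\sigma$ with ${\beta}_i\equiv\al_{\sigma(i)}\pmod1$.

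Finally I would assemble the conclusion. Joint independence makes every ${\beta}_i$ irrational, so each $n\mapsto\fracpart{n{\beta}_i+\delta_i}$ has mean $\tfrac12$; taking means in the reduced identity then forces $c=0$, i.e. $\sum\bar\gamma=\sum\bar\delta$. From ${\beta}_i\equiv\al_{\sigma(i)}\pmod1$ I get integers $\zeta_i$ with ${\beta}_i=\al_{\sigma(i)}+\zeta_i$, and $\sum\bar\al=\sum\bar{\beta}$ yields $\sum\bar\zeta=0$. Comparing the phases of the coefficients at the fundamental character $\al_j$, now carried by a single $\bar y$-term, gives $\delta_{\sigma^{-1}(j)}\equiv\gamma_j\pmod1$, i.e. integers $\eta_i$ with $\delta_i=\gamma_{\sigma(i)}+\eta_i$ for the same $\sigma$; and $\sum\bar\gamma=\sum\bar\delta$ forces $\sum\bar\eta=0$. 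This is exactly the asserted structure.
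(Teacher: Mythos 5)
Your route is genuinely different from the paper's. The paper never passes to Fourier analysis: it studies the jump sequence $\Delta(n)=S(n+1)-S(n)\in\{0,1,\dots,d\}$, notes that by equidistribution the asymptotic frequency of $r$-jumps is the coefficient of $z^r$ in $\prod_{i=1}^d\bigl((1-\al_i)z+\al_i\bigr)$, so the sequence determines this polynomial, hence its roots $-\al_i/(1-\al_i)$, hence the multiset $\{\al_i\}$; the $\gamma_i$ are then read off from the closure of the set of times of maximal jumps, which is the box $\prod_j[1-\al_j-\gamma_j,\,1-\gamma_j]$. Several pieces of your argument are correct and complete: the reduction $\sum\bar\al=\sum\bar\beta$ and the fractional-part identity, the passage to the orbit closure $W=(\bar\gamma,\bar\delta)+H$ with $\psi=0$ in $L^2(W)$, the fact that the $\bar x$-side frequencies $k\al_i$ give pairwise distinct nontrivial characters of $H$, and the endgame (phases at the fundamentals giving $\delta_{\sigma^{-1}(j)}\equiv\gamma_j$, means giving $c=0$, hence $\sum\bar\zeta=\sum\bar\eta=0$). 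Indeed, if completed, your approach confronts head-on the degenerate possibilities for $\bar\beta$ (rational or rationally dependent entries) that the paper's own proof silently passes over when it applies its analysis to the $\beta$-side.

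The gap is exactly where you flag it, and the ingredients you list do not close it as stated. Interference is real: for example $\beta_1=\beta_2\equiv\al_1/2$, $\delta_2=\delta_1+\tfrac12$, $2\delta_1\equiv\gamma_1\pmod 1$ reproduces \emph{every} coefficient $-e^{2\pi i m\gamma_1}/(2\pi i m)$ of the $\al_1$-harmonics exactly, with all stray odd harmonics cancelling; nothing in ``total mass $d/12$'' plus ``each $\beta_i$ meets a frequency in at most one harmonic'' forbids such blocks, because the functions $\fracpart{y_j+\delta_j}$ are not orthogonal on $W$ and their energies are not additive. What kills these configurations is a disjointness observation you never state: a single $\beta_j$ cannot have harmonics on the classes of two distinct $\al_i$'s, since $l\beta_j\equiv\al_i$ and $l'\beta_j\equiv\al_{i'}\pmod 1$ with $ll'\neq 0$ would give $l'\al_i-l\al_{i'}\in\ZZ$, contradicting the irrationality of $\bar\al$. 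With that in hand the counting closes by pigeonhole at the fundamentals alone: for each $i$, the set $J_i$ of indices $j$ contributing to the character of $\al_i$ is nonempty (the contributions have moduli $1/(2\pi|l_j|)$ and must sum in modulus to at least $1/(2\pi)$, so $\sum_{j\in J_i}1/|l_j|\ge 1$; rational $\beta_j$ can never contribute since $l\beta_j\equiv\al_i$ forces $\beta_j$ irrational), and the sets $J_1,\dots,J_d$ are pairwise disjoint subsets of $\{1,\dots,d\}$, hence each is a singleton with $|l_j|=1$. This yields $\beta_j\equiv\pm\al_{\sigma(j)}\pmod 1$ for a permutation $\sigma$ --- in particular all $\beta_j$ are irrational, which is what your ``counting argument'' was supposed to deliver --- and your second-harmonic phase comparison then correctly eliminates the minus sign. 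A smaller repair: when some $\beta_j$ is rational you may not ``restrict'' the sawtooth Fourier series to $W$, since $W$ projects to a Haar-null finite subgroup of that coordinate circle and restriction is not an $L^2$ operation; instead compute the coefficients of $\fracpart{y_j+\delta_j}$ directly on $H$ (they are a discrete Fourier transform of the sampled sawtooth). The pigeonhole above makes that case vacuous in the end, but the expansion step must be stated so that it is legitimate before you know the $\beta_j$ are irrational.
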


Using the fact that for non-integral $\alpha$, the sequence $(|\floor{n\alpha}|)_{n=1}^\infty$ contains arbitrarily large primes, we can also handle products. Note that in this case we do not need the irrationality of $\bar\al$.
\begin{theorem}\label{ProductForm}
Let $K(\bar x)= x_1x_2\cdots x_d$, and $\bar\al,\bar {\beta}\in\RR^d$. If
    \[\forall n\ge 1, \qquad K(\floor{n\bar{\al}})=K(\floor{n\bar{{\beta}}}),\]
then either some $\alpha_i$ is an integer or $\{\alpha_1,\dots,\alpha_d\}=\{\beta_1,\dots,\beta_d\}$ (as multi-sets).
\end{theorem}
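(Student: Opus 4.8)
The plan is to induct on $d$, at each stage peeling off one $\alpha_i$ that provably coincides with some $\beta_j$. Throughout we may assume no $\alpha_i$ is an integer (otherwise the first alternative holds and we are done); in particular every $\alpha_i\neq 0$, so $|\floor{n\alpha_i}|\to\infty$ and $\floor{n\alpha_i}$ carries the sign of $\alpha_i$ once $n$ is large. If some $\beta_j=0$ the right-hand side vanishes identically while the left-hand side is eventually nonzero, a contradiction; so every $\beta_j\neq 0$ as well. Since only large $n$ enter the arguments below, it is harmless to read the hypothesis as holding for all sufficiently large $n$, which is also the form in which the inductive hypothesis will be applied; the base case $d=1$ is the elementary fact that $\floor{n\alpha}=\floor{n\beta}$ for all large $n$ forces $\alpha=\beta$.

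For the inductive step I single out an index with $|\alpha_1|=\max_i|\alpha_i|=:M$ and try to locate a $\beta_j$ equal to $\alpha_1$. The engine is the cited fact that the non-integral $\alpha_1$ makes $|\floor{n\alpha_1}|$ assume arbitrarily large prime values. Fix such a large prime $P=|\floor{N\alpha_1}|$. Then $P$ divides $\prod_j\floor{N\beta_j}=\prod_i\floor{N\alpha_i}$, hence $P\mid\floor{N\beta_j}$ for some $j$, so $\floor{N\beta_j}=\pm mP$ with $m\ge1$. Comparing sizes, $mP\le|\floor{N\beta_j}|\le N|\beta_j|$ while $P>NM-1$; provided $|\beta_j|\le M$ this forces $mP<P+1$, i.e. $m=1$ and $|\floor{N\beta_j}|=P=|\floor{N\alpha_1}|$. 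Passing to a subsequence on which a single $j$ works and dividing by $N$ gives $|\beta_j|=M$, and once the sign is settled (see below) this yields $\beta_j=\alpha_1$. Now the identity $\floor{n\alpha_1}=\floor{n\beta_j}$ lets me cancel this common nonzero factor from the product equation for all large $n$, leaving an instance of the theorem in dimension $d-1$ to which the inductive hypothesis applies; reinstating $\alpha_1=\beta_j$ then gives the multiset equality.

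The part that needs care—and what I expect to be the real obstacle—is justifying the two conveniences used above: that the matched $\beta_j$ satisfies $|\beta_j|\le M$, and that the sign in $\floor{N\beta_j}=\pm P$ is $+$. For the first it suffices to prove $\max_j|\beta_j|=M$, and the inequality $\max_j|\beta_j|\ge M$ already falls out of the size estimate. For the reverse, if the largest $|\beta_j|$ is attained by a \emph{non-integral} $\beta_j$, I apply the prime fact to that $\beta_j$ and match it back to some $\alpha_i$, forcing $|\alpha_i|\ge\max_j|\beta_j|$ and hence $M\ge\max_j|\beta_j|$. The genuinely delicate case is an \emph{integer} $\beta_j$ of large magnitude, to which the prime fact does not apply and which can absorb $P$ through the divisibility $P\mid N$ (since $\floor{n\beta_j}=n\beta_j$) rather than through its own size. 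Both this case and the sign ambiguity are defeated by exploiting the freedom in the choice of $N$: the equation $\floor{n\alpha_1}=P$ holds on a block of $\asymp 1/|\alpha_1|$ consecutive integers $n$, and there are infinitely many primes $P$ available. Choosing $N$ in such a block with $N\alpha_1\notin\ZZ$ kills the sign flip, because then $\floor{-N\alpha_1}=-\floor{N\alpha_1}-1\neq -P$; choosing $N$ not divisible by $P$ prevents a large integer $\beta_j$ from swallowing $P$, forcing $P$ into a factor of honest size $\ge P\approx NM$, incompatible with $|\beta_j|>M$. I would expect the bookkeeping of these choices—together with the degenerate range $|\alpha_1|>1$, where a prime value is attained at a single $n$ and one instead checks that the coincidence $N\alpha_1\in\ZZ$ can be arranged to fail for all but finitely many of the primes $P$—to be the most technical portion of the argument.
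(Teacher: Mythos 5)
Your proposal is correct and takes essentially the same route as the paper: induction on $d$, with Lemma~\ref{lem:Beatty} (arbitrarily large primes among $|\floor{n\alpha}|$ for non-integral $\alpha$) used to identify the factor of maximal modulus on one side inside the factorization of the other side, after which that common factor is cancelled and the inductive hypothesis applied. If anything, your divisibility-plus-size argument, together with the devices of choosing $N$ with $N\alpha_1\notin\ZZ$ and $P\nmid N$, is more careful than the published proof, which tacitly assumes positivity, does not address integer $\beta_j$'s or signs, and asserts (falsely for $d\ge 3$, e.g.\ factors $101,97,89$) that every prime factor of $P_n$ exceeding $P_n^{1/d}$ must come from $\floor{n\alpha_1}$.
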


The next theorem assumes algebraic independence of the $\al_i$, but this is used in only a very weak manner. The hypothesis could be weakened to assuming that $\bar\al$ is irrational and the $\al_i$ do not satisfy any of a specific (depending on $K$) small finite set of algebraic relations. In fact, we believe that the conclusion is true as long as none of $\al_i$ are integers. Additionally, whether a particular form for $S$ can be included in the following theorem depends on an {\em ad hoc} solution of a system of equations that arises. Certainly the given list is not the extent of the method, but a general statement remains elusive.
\begin{theorem}\label{Skk}
Let $K(\bar x)=S(\bar x)+R(\bar x)$ be a polynomial, where $S(x)$ is a symmetric polynomial of one the following types ($d\ge 2, r\ge 2$)
    \[\prod_{i=1}^d x_i, \quad \sum_{i=1}^d x_i^r ,\quad\text{or} \quad\sum_{i,j=1}^d x_ix_j,\]
and $\deg(R)<\deg(S)$. Assume that $\al_i$ ($1\le i \le d$) are positive and do not satisfy any algebraic relations of degree less than $\deg(S)$, and ${\beta}_i$  ($1\le i \le d$) are positive and do not satisfy any algebraic relations of degree less than $\deg(S)$. If
    \[\forall n\ge 1, \qquad K(\floor{n\bar\al})=K(\floor{n\bar {\beta}}),\]
then $\{\alpha_i \colon 1\le i \le d\}=\{{\beta}_i \colon 1\le i \le d\}$.
\end{theorem}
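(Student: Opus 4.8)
The plan is to extract the leading-order asymptotic behavior of both sides of $K(\floor{n\bar\al})=K(\floor{n\bar{\beta}})$ and convert the resulting equidistribution data into a system of algebraic relations that force the multisets to agree. First I would write $\floor{n\al_i}=n\al_i-\fracpart{n\al_i}$ and expand $S(\floor{n\bar\al})$. The dominant term is $S(n\bar\al)=n^{\deg S}S(\bar\al)$ (using that $S$ is homogeneous in each of the three listed cases), and the difference $S(\floor{n\bar\al})-n^{\deg S}S(\bar\al)$ is a polynomial in $n$ of degree $\deg S-1$ whose coefficients are bounded functions of the fractional parts $\fracpart{n\al_i}$. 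Dividing the hypothesis by $n^{\deg S}$ and letting $n\to\infty$ gives $S(\bar\al)=S(\bar{\beta})$, since $R$ contributes only lower-order terms. This is the first symmetric-function identity.

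Next I would iterate: having matched the top-degree coefficient, I subtract $n^{\deg S}S(\bar\al)$ from both sides and examine the coefficient of $n^{\deg S-1}$. This coefficient is not a constant but a bounded sequence depending on $\langle\fracpart{n\al_1},\dots,\fracpart{n\al_d}\rangle$; the key tool is Weyl equidistribution. Because $\bar\al$ is irrational, the sequence $(\fracpart{n\bar\al})_n$ is equidistributed in a subtorus of $[0,1)^d$, and similarly for $\bar{\beta}$; I would average the $n^{\deg S-1}$-coefficient identity over $n$, or more robustly match the full distribution of the bounded part, to read off a second relation of the form (schematically) $\sum_i \al_i^{\,?}\cdot c = \sum_i \beta_i^{\,?}\cdot c$ together with the constraint $\sum_i\al_i=\sum_i\beta_i$ coming from the integral of the fractional-part correction. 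Repeating this descent through each power of $n$ produces, for each of the three shapes of $S$, a finite list of symmetric identities in $\bar\al$ and $\bar{\beta}$.

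The point of the hypothesis ``no algebraic relations of degree less than $\deg S$'' is then exactly to guarantee that the extracted relations are enough to pin down the elementary symmetric functions $e_1(\bar\al),\dots,e_d(\bar\al)$ and force them to equal $e_1(\bar{\beta}),\dots,e_d(\bar{\beta})$; once all elementary symmetric functions agree, the two multisets of roots coincide by the fundamental theorem of symmetric polynomials. Concretely, for $S=\sum x_i^r$ I would obtain the power sums $p_r(\bar\al)=p_r(\bar{\beta})$ and $p_1(\bar\al)=p_1(\bar{\beta})$ and then push to enough further power sums via the lower-order terms; for $S=\prod x_i$ I would use the prime-containing argument flagged before Theorem~\ref{ProductForm} to first align individual factors; for $S=\sum_{i,j}x_ix_j=(\sum x_i)^2$ the descent yields $e_1$ and $e_2$ directly.

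The main obstacle I anticipate is the second step: controlling the $n^{\deg S-1}$ and lower coefficients rigorously. These are sums of products of $\al_i$ (or $\beta_i$) with fractional-part factors $\fracpart{n\al_j}$, and one must argue that two such bounded sequences can agree for all $n$ only if the underlying coefficient data match. This is where equidistribution, the irrationality of $\bar\al$, and the stated prohibition of low-degree algebraic relations all enter, and it is also where the ``ad hoc solution of a system of equations'' mentioned in the theorem's preamble is unavoidable — the exact system, and hence the argument, differs for each of the three admissible shapes of $S$.
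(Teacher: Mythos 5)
Your opening step is sound: dividing by $n^{\deg S}$ and letting $n\to\infty$ does give $S(\bar\al)=S(\bar\beta)$. But everything after that---which is the entire content of the theorem---is deferred, and the two concrete mechanisms you offer for the ``descent'' both fail. Averaging over $n$ yields, at order $n^{D-1}$ (write $D=\deg S$, and let $R_k$ denote the degree-$k$ homogeneous part of $R$), the \emph{single} scalar equation
\begin{equation*}
-\tfrac12\sum_{j=1}^d \frac{\partial S}{\partial x_j}(\bar\al)+R_{D-1}(\bar\al)
\;=\;
-\tfrac12\sum_{j=1}^d \frac{\partial S}{\partial x_j}(\bar\beta)+R_{D-1}(\bar\beta),
\end{equation*}
and in general one scalar equation per power of $n$, each contaminated by the terms $R_k(\bar\al)$, $R_k(\bar\beta)$ (which are not even symmetric functions unless $R$ happens to be symmetric). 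There are only $D+1$ powers of $n$ available, so for $S=\sum_i x_i^r$ with $d>r+1$, or $S=\sum_{i,j}x_ix_j$ with $d$ large, you have far fewer equations than the $d$ independent symmetric-function identities you would need: Ces\`aro means alone cannot force multiset equality. You also misread the role of the hypothesis: ``no algebraic relations of degree $<\deg S$'' constrains $\bar\al$ and $\bar\beta$ \emph{separately}; it says nothing about joint relations between the two tuples (your own first conclusion $S(\bar\al)=S(\bar\beta)$ is such a joint relation), so it cannot upgrade a handful of mixed identities to equality of all elementary symmetric functions. Finally, invoking the prime argument of Theorem~\ref{ProductForm} for the shape $S=\prod x_i$ is illegitimate once $R\neq0$: the value $K(\floor{n\bar\al})$ is then no longer the product of the $\floor{n\al_i}$, so its prime factorization carries no information about the individual factors.

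What the paper does instead, and what your proposal is missing, is a mechanism that extracts a \emph{finite set} rather than a mean. The paper considers the first differences $\Delta(n)=\bigl(K(\floor{(n+1)\bar\al})-K(\floor{n\bar\al})\bigr)/n^{D-1}$; since $\floor{(n+1)\al_i}-\floor{n\al_i}\in\{\floor{\al_i},\ceiling{\al_i}\}$, the limit points of $\Delta(n)$ are exactly the $2^d$ numbers $\sum_i [\al_i]\,\partial S/\partial x_i(\bar\al)$ with $[\al_i]\in\{\floor{\al_i},\ceiling{\al_i}\}$, a combinatorial cube whose $2^d$ points are distinct precisely because of the no-low-degree-relations hypothesis (this is that hypothesis's actual role). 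A generating-function lemma---two full cubes that coincide as sets have the same multiset of edge lengths---then recovers $\bigl\{\,\bigl|\partial S/\partial x_j(\bar\al)\bigr|\,\bigr\}$, after which short ad hoc computations for each shape of $S$ recover $\{\al_j\}$. Your parenthetical fallback of ``matching the full distribution of the bounded part'' could in principle be completed along the same lines: the order-$n^{D-1}$ fluctuations of the two sides agree up to $O(1/n)$, hence have the same limiting law, namely that of $\sum_j U_j\,\partial S/\partial x_j(\bar\al)$ plus a shift with $U_j$ i.i.d.\ uniform, and a characteristic-function (zero-set) argument recovers the coefficient multiset from that law. But that recovery lemma is the continuous analogue of the paper's cube lemma and is precisely the heart of the proof; nothing in your writeup supplies it, so as it stands the proposal is a plan with the decisive step absent.
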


Rasmussen~\cite{Rasmussen} proves the $d=2$ and $d=3$ cases of the following conjecture:
\begin{conjecture}\label{conj:nested}
Suppose that $\bar{\al},\bar{\beta}\in[0,1)^d$, and that both $\langle \al_1, \al_1\al_2,\ldots,\al_1\al_2\cdots \al_d\rangle$ and $\langle \beta_1, \beta_1\beta_2,\ldots,\beta_1\beta_2\cdots \beta_d\rangle$ are irrational. If
    $$\floor{ \cdots \floor{\floor{n\al_1}\,\al_2}\,\cdots \al_d} = \floor{ \cdots \floor{\floor{n \beta_1}\,\beta_2}\,\cdots \beta_d} $$
for all $n\ge 1$, then $\bar{\al}=\bar{\beta}$.
\end{conjecture}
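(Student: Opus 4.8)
The plan is to recover the parameters from coarse-to-fine spectral data of the observed sequence, inducting on $d$. Write $f_{\bar\al}(n)$ for the nested floor, set $p_k=\al_1\cdots\al_k$ and $q_k=\beta_1\cdots\beta_k$, and let $F_k(n)=\floor{F_{k-1}(n)\,\al_k}$ with $F_0(n)=n$, so $f_{\bar\al}=F_d$. The irrationality hypothesis is precisely that $1,p_1,\dots,p_d$ are linearly independent over $\QQ$ (in particular every $\al_i\in(0,1)$, since any $\al_i=0$ would put $0=p_d\in\ZZ$). Peeling one floor at a time gives $F_k(n)=F_{k-1}(n)\al_k-e_k(n)$ with $e_k(n)=\fracpart{F_{k-1}(n)\al_k}\in[0,1)$, and unwinding yields the bounded ``defect''
\[ D_{\bar\al}(n):=n\,p_d-f_{\bar\al}(n)=\sum_{k=1}^d e_k(n)\prod_{j>k}\al_j. \]
Since $D_{\bar\al}$ is bounded, $f_{\bar\al}(n)/n\to p_d$, so first $p_d=q_d=:P$; this is the only place the growth rate is used.

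Next I would package $D_{\bar\al}$ as an almost periodic sequence. A short induction shows each $e_k(n)$, hence $D_{\bar\al}(n)$, is a fixed (piecewise-continuous) function $\Phi_{\bar\al}$ of the point $\bar u(n)=(\fracpart{n p_1},\dots,\fracpart{n p_d})$; concretely $e_1=\fracpart{U_1}$ and $e_k=\fracpart{U_k-\phi_k(U_1,\dots,U_{k-1})}$ for suitable $\phi_k$. By Weyl's criterion the hypothesis makes $\bar u(n)$ equidistribute in $[0,1)^d$, so the mean Fourier coefficients $c(\theta)=\lim_{N}N^{-1}\sum_{n\le N}D(n)e^{-2\pi i n\theta}$ exist, vanish off the module $\Lambda_{\bar\al}=p_1\ZZ+\cdots+p_d\ZZ\pmod 1$, and equal $c(\sum m_k p_k)=\widehat{\Phi_{\bar\al}}(\bar m)$ (no aliasing, by $\QQ$-independence). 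The hypothesis $f_{\bar\al}=f_{\bar\beta}$ forces $D_{\bar\al}=D_{\bar\beta}$ pointwise, so all of this coefficient data agrees; in particular $\Lambda_{\bar\al}=\Lambda_{\bar\beta}$.

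The key computation isolates $\al_1$. For $k\ge2$ the variable $U_k$ enters $e_k$ only through an outer fractional part, so $\int_0^1 e_k\,dU_k=\tfrac12$ is constant; hence $e_k$ contributes nothing to any pure frequency $m p_1$ (lattice point $(m,0,\dots,0)$), and only $e_1=\fracpart{U_1}$ survives. Thus along the cyclic group $\langle p_1\rangle$ the restriction of $D$ is \emph{exactly} a scaled sawtooth,
\[ c(m\,p_1)=\frac{-1}{2\pi i\,m}\cdot\frac{P}{\al_1}\qquad(m\in\ZZ\setminus\{0\}), \]
with amplitude $P/\al_1=\prod_{j\ge2}\al_j$. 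I would characterize $p_1$ as the generator of maximal amplitude among frequencies $\theta$ for which $D$ restricted to $\langle\theta\rangle$ is a pure sawtooth, read off the amplitude, and recover $\al_1=P/\prod_{j\ge2}\al_j$. Running the identical analysis on the $\bar\beta$-side gives $\beta_1$, and matching forces $\al_1=\beta_1$. Knowing $\al_1$ determines the innermost sequence $\floor{n\al_1}$, which takes every nonnegative integer value with unit jumps; rewriting $f$ as the $(d-1)$-fold nested function in $\al_2,\dots,\al_d$ read along it reduces to the $d-1$ case (base case $d=1$: $\al$ is the jump-density of $\floor{n\al}$), completing the induction.

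The main obstacle is the identification step. The module $\Lambda_{\bar\al}$ is dense in $\RR/\ZZ$, and one must prove that $p_1$ is genuinely singled out: no frequency involving $p_2,\dots,p_d$ yields a pure sawtooth of equal or larger amplitude (note e.g.\ $\langle p_d\rangle$ narrowly fails, since $c(m p_d)=\frac{-1}{2\pi i m}\int e^{-2\pi i m\phi_d}$ is not proportional to $1/m$ with a constant factor). Equally delicate is verifying that the deflated $(d-1)$-parameter sequence again satisfies the linear-independence hypothesis and that the recursion's frequency bookkeeping stays consistent. I expect this disentanglement, rather than the equidistribution, to be the crux, and to be exactly what confines the currently proved cases to $d\le 3$.
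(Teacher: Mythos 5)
You have not proved the statement; what you have is a program with a hole at exactly the step that carries all the difficulty. First, on status: this statement is a \emph{conjecture} in the paper, which proves only the $d=2$ and $d=3$ cases (Rasmussen's moment method, reproduced with corrections in Section~\ref{sec:Rasmussen}); a complete argument for general $d$ would go well beyond the paper. Much of your framework is sound and is a genuine Fourier-analytic variant of the paper's moment approach: the defect $D_{\bar\al}(n)=np_d-f_{\bar\al}(n)=\sum_k e_k(n)\prod_{j>k}\al_j$ is indeed a fixed Riemann-integrable function of $(\fracpart{np_1},\dots,\fracpart{np_d})$; the hypothesis is exactly $\QQ$-linear independence of $1,p_1,\dots,p_d$; equidistribution gives the Bohr--Fourier coefficients $c(\theta)$, supported on $\{\sum_k m_kp_k \bmod 1\}$ with no aliasing; and your computation $c(mp_1)=\frac{-1}{2\pi i m}\prod_{j\ge 2}\al_j$ is correct, since for $k\ge 2$ the inner integral $\int_0^1\fracpart{u_k-\phi_k}\,du_k=\frac12$ annihilates the contribution of $e_k$ at frequencies $mp_1$. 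Moreover, one of the two obstacles you flag is actually easy: a rational relation $c_0+c_2\al_2+\cdots+c_d\al_2\cdots\al_d=0$, multiplied by $\al_1$, becomes $c_0p_1+c_2p_2+\cdots+c_dp_d=0$, contradicting independence, so the irrationality hypothesis does pass to $(\al_2,\dots,\al_d)$; and since $\al_1\in(0,1)$, the sequence $\floor{n\al_1}$ hits every positive integer, so knowing $\al_1=\beta_1$ really does reduce $d$ to $d-1$.

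The genuine gap is the identification step, and it is not a technicality. From the sequence you know only the function $\theta\mapsto c(\theta)$; to extract $\al_1$ you must prove that $p_1$ is the \emph{unique} frequency (mod $1$, up to sign) of maximal amplitude whose harmonics $(c(m\theta))_{m\neq 0}$ are exactly proportional to $1/m$. The spectrum lives on the dense module $\{\sum_k m_kp_k\bmod 1\}$, and for a general direction $\bar m$ with some $m_k\neq 0$, $k\ge 2$, the coefficients $c(m\theta)=\widehat{\Phi}(m\bar m)$ are oscillatory integrals of a piecewise-linear function whose breakpoints depend on $\bar\al$; you must rule out, for \emph{every} such $\bar m$, that these conspire to form a pure sawtooth of amplitude at least $\prod_{j\ge2}\al_j$. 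Your one example ($\theta=p_d$, where the coefficients decay like $1/m^2$) is evidence, not proof, and there are infinitely many directions to exclude; Parseval limits how many can carry large amplitude but does not say $p_1$ wins among them. (A smaller slip of the same kind: equality of the supports of $c$ does not give $\Lambda_{\bar\al}=\Lambda_{\bar\beta}$ unless you also know no coefficient $\widehat{\Phi}(\bar m)$ accidentally vanishes.) Without the identification step, $\al_1=\beta_1$ is never established and the induction cannot start. This disentanglement is precisely the wall at which the paper's own method also stops: the moments $T_{d,k}$, like your Fourier coefficients, are computable for every $d$, but inverting the statistics to recover the $\al_i$ has only been carried out ad hoc for $d\le 3$.
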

We give his proofs (with corrections) in Section~\ref{sec:Rasmussen}. It is certainly desirable to extend his work to $d> 3$, to weaken the irrationality condition, and to consider $\al_i\in\RR$ instead of merely $\al_i\in[0,1)$. Using a different method, we make the following step in this direction.

\begin{theorem}\label{Snested2}
Suppose that $\bar{\al},\bar{\beta} \in [1,\infty)\times[2,\infty)^{d-1}$ are irrational. If
    \[\forall n \geq 1, \qquad \floor{ \cdots \floor{\floor{n\al_1}\,\al_2}\,\cdots \al_d} = \floor{ \cdots \floor{\floor{n\beta_1}\,\beta_2}\,\cdots \beta_d},\]
then the sets of fractional parts are equal: $\{ \{\al_1\},\dots,\{\al_d\}\} = \{\{\beta_1\},\dots,\{\beta_d\}\}$.
\end{theorem}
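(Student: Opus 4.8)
The plan is to read the fractional parts off the first- and second-order asymptotics of $f(n):=\floor{\cdots\floor{\floor{n\al_1}\al_2}\cdots\al_d}$, peeling off one outer layer at a time. Set $g_0(n)=n$ and $g_j(n)=\floor{g_{j-1}(n)\al_j}$, so that $f=g_d$. Because $\al_1\ge1$ and $\al_i\ge2$ for $i\ge2$, each $g_j$ is strictly increasing, hence so is $f$, and an easy induction gives $g_j(n)=n\prod_{i\le j}\al_i-E_j(n)$ with $E_j$ bounded. Dividing by $n$ and letting $n\to\infty$ shows as a first consequence that the products agree: $\prod_i\al_i=\lim_n f(n)/n=\prod_i\beta_i=:P$. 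Since the integer sequence $f$ is the same whether built from $\bar\al$ or $\bar\beta$, the bounded remainders in fact agree exactly, $E_d^{\al}(n)=nP-f(n)=E_d^{\beta}(n)$ for all $n$; this identity, together with the gap analysis below, is what allows the two parameter vectors to be compared directly.

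Next I would study the gaps $f(n+1)-f(n)=\floor{g_{d-1}(n+1)\al_d}-\floor{g_{d-1}(n)\al_d}$. Writing $\Delta(n):=g_{d-1}(n+1)-g_{d-1}(n)\ge1$ for the increment of the inner sequence, each gap lies in the interval $(\Delta(n)\al_d-1,\Delta(n)\al_d+1)$. The hypothesis $\al_d\ge2$ (in fact $\al_d>2$, since $\al_d$ is irrational) is exactly what forces these intervals, for distinct integers $\Delta$, to be pairwise disjoint, so the observed gap value alone sorts the indices $n$ into clusters indexed by $\Delta(n)$, and within the cluster for a fixed $\Delta$ only the two consecutive values $\floor{\Delta\al_d}$ and $\floor{\Delta\al_d}+1$ occur. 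A subsidiary induction on the layers shows that the increment set of $g_{d-1}$ contains two consecutive integers $\Delta_0<\Delta_0+1$, with $\Delta_0$ least: at the first layer the increments of $\floor{n\al_1}$ are the consecutive $\floor{\al_1},\floor{\al_1}+1$, and passing through any further layer sends an achieved increment $a$ to the consecutive pair $\floor{a\al_j},\floor{a\al_j}+1$.

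The key step is to identify, restricted to the positive-density set of $n$ with $\Delta(n)=\Delta$, the frequency with which the larger value $\floor{\Delta\al_d}+1$ occurs as exactly $\fracpart{\Delta\al_d}$ (the standard two-distance count $\floor{(m+\Delta)\al_d}-\floor{m\al_d}=\floor{\Delta\al_d}+[\fracpart{m\al_d}+\fracpart{\Delta\al_d}\ge1]$ with $m=g_{d-1}(n)$). Applying this to the two least clusters $\Delta_0$ and $\Delta_0+1$ and subtracting, $\fracpart{(\Delta_0+1)\al_d}-\fracpart{\Delta_0\al_d}\equiv\al_d\pmod1$, recovers $\fracpart{\al_d}$ from $f$ alone. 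This difference is insensitive to the actual value of $\Delta_0$, which for $d\ge3$ exceeds $1$ and is itself invisible in $f$ (calibrating it would require the integer parts of $\al_1,\dots,\al_{d-1}$); this is the structural reason the integer parts and the layer labels are lost and only fractional parts survive, matching the multiset conclusion. To continue, the cluster labels reconstruct the increment sequence $\Delta(n)$ up to an unknown common additive shift, and since both the product asymptotics and the gap-clustering use only which labels are equal, which clusters are adjacent, and differences of adjacent-cluster frequencies — all shift-invariant — the same procedure applied to this increment data extracts $\fracpart{\al_{d-1}}$, and inductively $\fracpart{\al_{d-2}},\dots,\fracpart{\al_1}$. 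Running the deterministic procedure on the common sequence $f$ for both vectors then yields equality of the multisets $\{\fracpart{\al_1},\dots,\fracpart{\al_d}\}$ and $\{\fracpart{\beta_1},\dots,\fracpart{\beta_d}\}$.

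The hard part will be the equidistribution underlying the frequency identification: one must show that $\fracpart{g_{d-1}(n)\al_d}$ equidistributes over the subsequence of $n$ with a prescribed inner increment $\Delta$, and likewise at every deeper layer, so that the within-cluster frequency is genuinely $\fracpart{\Delta\al_d}$ rather than some other averaged quantity. This is where the global irrationality hypothesis on $\bar\al$ should enter, through joint equidistribution of the nested orbit $(\fracpart{g_0(n)\al_1},\dots,\fracpart{g_{d-1}(n)\al_d})$, and it is the step most likely to require weakening (or the ad hoc handling) that confines the clean conclusion to the multiset of fractional parts. Secondary technical points — verifying cluster separation uniformly, that the relevant increments occur with positive density, and that the recursion survives the uncalibrated additive shift in the increment labels — are routine once the equidistribution is in hand.
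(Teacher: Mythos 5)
Your outline takes a genuinely different route from the paper's, and---conditional on the equidistribution step you explicitly defer---it is sound; in fact it is \emph{more} careful than the paper at the critical point. The paper also starts from the gap sequence, whose values are the $2^d$ nested round-up/round-down numbers $T(W,\bar\al;1)$, $W\in\{0,1\}^d$, but then proceeds globally: Lemma~\ref{lem:Bigalpha} (whose role your interval-disjointness observation plays) shows these values are ordered lexicographically in $W$, so the gap data of $\bar\al$ and $\bar\beta$ can be matched word by word; Lemma~\ref{lem:BigDeal} then groups gap frequencies by Hamming weight into the coefficients of $P(z)=\prod_{i=1}^d\big(\fracpart{\al_i}+(1-\fracpart{\al_i})z\big)$ and reads the multiset $\{\fracpart{\al_i}\}$ off the roots in one shot. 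You instead cluster gaps by the inner increment $\Delta(n)$, extract $\fracpart{\al_d}$ from the difference of the internal frequencies of the two least (adjacent) clusters, and recurse inward. Your route buys something real: you correctly record the within-cluster frequency as $\fracpart{\Delta\al_d}$, a quantity depending on the inner increment, and your subtraction trick exists precisely to cancel that dependence. The paper's density formula $V_W=\prod_{w_i=0}\fracpart{\al_i}\prod_{w_i=1}(1-\fracpart{\al_i})$ ignores this dependence: the correct factor at level $i$ is $\fracpart{T(w_1\cdots w_{i-1},\bar\al;1)\,\al_i}$ or its complement, not $\fracpart{\al_i}$ or its complement, and with the true frequencies the paper's polynomial does not factor as claimed (for $\al_1\approx 1.5$, $\al_2\approx 2.2$ the weight-class sums are $0.4,\,0.4,\,0.2$, whereas any factorization of the claimed form forces $0.4,\,0.5,\,0.1$). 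So your layer-peeling is not a clumsier substitute for the polynomial trick; it is a repair of it. A by-product is that your procedure, if completed, recovers each $\fracpart{\al_i}$ attached to its layer (indeed each $\al_i$ outright), strictly more than the stated multiset conclusion; your aside that the integer parts are ``invisible'' is therefore not right, but it costs you nothing.

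Two gaps remain, one of which you flagged yourself. First, the conditional equidistribution of $\fracpart{g_{d-1}(n)\al_d}$ on each fiber $\{n:\Delta(n)=\Delta\}$ (and its analogues at deeper layers) is the crux, and it is not merely unproved but can fail under the theorem's stated hypothesis: for $\al_1=7/\pi$, $\al_2=\pi$, the vector $\bar\al$ is irrational in the paper's sense, yet $\floor{n\al_1}\al_2=7n-\fracpart{n\al_1}\pi$, so $\fracpart{\floor{n\al_1}\al_2}$ is a deterministic function of $\fracpart{n\al_1}$, joint uniformity fails, and the cluster frequencies are genuinely different from $\fracpart{\Delta\al_2}$. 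A complete write-up must either strengthen the hypothesis (e.g.\ to irrationality of $\langle\al_1,\al_1\al_2,\dots,\al_1\cdots\al_d\rangle$, as in Conjecture~\ref{conj:nested}) or argue differently; note the paper's own proof silently suffers the same defect at the same spot, so you are no worse off, but this is where the real work lies. Second, your claim that the cluster labels determine $\Delta(n)$ ``up to an unknown common additive shift'' needs justification: it does follow, but only via the observation that each cluster's bottom value plus its internal frequency determines $\Delta\al_d$ exactly, so the two least clusters determine $\al_d$ itself and hence every inter-cluster increment difference; without this, the ``adjacency'' of clusters that your recursion needs at the next level is not observable from the sequence.
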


\section{Proofs}

\subsection{Proof of Theorem~\ref{S11}}

\begin{proof}
Without loss of generality, we assume that $\bar\al,\bar \gamma$ are in $[0,1)^d$ and that $\bar\al$ is irrational. Let $S(i)=K(\floor{n\bar\al+\bar\gamma})$, and set. Define $\Delta(i)=S(i+1)-S(i)$. Thus $\Delta(i)\in \{0,1,\dots,k\}$. We say that $S$ has an $r$-jump at $i$ if $S(i+1)-S(i)=\Delta(i)=r$. The frequency
of $r$-jumps of $S$ depends on the frequency that $(\fracpart{n\al_1+\gamma_1},\dots,\fracpart{n\al_k+{\beta}_k})$ is in a particular subcube of $[0,1)^k$. To wit, if there are exactly $r$ coordinates $j$ such that
    $$1-\al_j \le \fracpart{i\al_j+\gamma_j} <1,$$
which is equivalent (ignoring the technical circumstance when $1-\al_j-\gamma_j<0$) to
    $$1-\al_j-\gamma_j \le \fracpart{i\al_j} <1-\gamma_j,$$
then there is an $r$-jump at $i$. The volume of this region in $[0,1)^d$ is the asymptotic frequency of $r$-jumps of $S$, and is given by
    $$V_r = \sum_{\substack{R\subseteq K \\ |R|=r}} \, \prod_{i\in R} (1-\al_i)\prod_{j\in K\setminus R}  \al_j\quad \text{where $K=\{1,2,\dots,k\}$}.$$
Consider the polynomial
    $$P(z)=\prod_{i=1}^k \left\{(1-\al_i)z+\al_i\right\}=\sum_{r=0}^k V_r z^r,$$
which is determined by $S$. Hence, all the roots $-\frac{\al_i}{1-\al_i}$ of $P$ are determined by $S$, and
therefore, so are all the values $\al_i$.

Let $i_0,i_1,\dots$ be the sequence of $i$ such that $\Delta(i)=k$, which is exactly the same condition as `for all $j$, $1-\al_j-\gamma_j \le \fracpart{i\al_j} < 1-\gamma_j$'. By the irrationality of $\bar\al$, the closure of
    $$\{(\fracpart{i_t\al_1},\dots,\fracpart{i_t\al_k}):t=0,1,2,\dots\}$$
is the set
    $$\prod_{t=1}^k [1-\al_j-\gamma_j,1-\gamma_j].$$
Since we already know the $\al_j$, we find that the $\gamma_j$ are also determined.
\end{proof}

\subsection{Proof of Theorem~\ref{ProductForm}}

\begin{lemma}\label{lem:Beatty}
If $\alpha\in\RR$ is not an integer, then the sequence $(|\floor{n\alpha}|)_{n=1}^\infty$ of nonnegative integers contains arbitrarily large prime numbers.
\end{lemma}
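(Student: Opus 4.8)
The plan is to split on whether $\alpha$ is rational or irrational, after first reducing to $\alpha>0$. If $\alpha<0$ then $n\alpha<0$ for every $n\ge1$, so $|\floor{n\alpha}|=-\floor{n\alpha}=\lceil n|\alpha|\rceil$; since $-\floor{x}=\lceil -x\rceil$, the negative case is just the ``ceiling'' analogue of the positive one and is handled by the two arguments below with $\floor{\cdot}$ replaced by $\lceil\cdot\rceil$ throughout, so I will describe only $\alpha>0$. The subcase $0<\alpha<1$ is immediate: consecutive terms of $\floor{n\alpha}$ differ by $0$ or $1$ and the sequence is unbounded, so its image is all of $\NN\cup\{0\}$ and contains every prime. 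Hence I may assume $\alpha>1$.

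The key is a reformulation of membership in the image. For $\alpha>1$, a positive integer $m$ equals $\floor{n\alpha}$ for some $n$ exactly when the interval $[m/\alpha,(m+1)/\alpha)$, of length $1/\alpha\in(0,1)$, contains an integer, and an elementary computation shows this holds iff $\fracpart{m/\alpha}>1-1/\alpha$ (the degenerate case $m/\alpha\in\ZZ$ not arising when $\alpha$ is irrational). So it suffices to find infinitely many primes $P$ with $\fracpart{P/\alpha}$ lying in the interval $(1-1/\alpha,\,1)$. In the irrational case this is exactly what Vinogradov's theorem delivers: for irrational $\theta$ the fractional parts $\fracpart{P\theta}$ ($P$ prime) are equidistributed modulo $1$, so with $\theta=1/\alpha$ the proportion of primes landing in the target interval tends to its length $1/\alpha>0$, yielding infinitely many such primes.

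The rational case needs a different tool, since $\fracpart{P\theta}$ is not equidistributed for rational $\theta$. Write $\alpha=a/b$ in lowest terms with $b\ge2$. Restricting $n$ to a residue class $n\equiv r\pmod b$ linearizes the floor, $\floor{(bm+r)\alpha}=ma+\floor{ra/b}$, an arithmetic progression with common difference $a$ and constant term $c_r:=\floor{ra/b}$. A short congruence computation (from $b\,c_r=ra-(ra\bmod b)$, reducing modulo $a$ and using $\gcd(a,b)=1$) gives $\gcd(c_r,a)=\gcd(ra\bmod b,\ a)$; choosing $r$ with $ra\equiv1\pmod b$, which is possible since $\gcd(a,b)=1$, forces $\gcd(c_r,a)=1$. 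Dirichlet's theorem then supplies infinitely many primes in $\{ma+c_r:m\ge0\}$, all of which lie in the image.

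The main obstacle is the irrational case, whose entire content is Vinogradov's equidistribution theorem; everything else is elementary. The only bookkeeping to watch is the reduction for negative $\alpha$, where one must check that the ceiling analogues go through unchanged: the reformulation of image-membership for $\lceil\cdot\rceil$, the Dirichlet computation with $\lceil ra/b\rceil$ in place of $\floor{ra/b}$, and the remark that $\fracpart{P\theta+c}$ remains equidistributed under a constant shift $c$.
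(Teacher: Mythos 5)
Your proof follows essentially the same route as the paper's: the trivial case $0<|\alpha|<1$, the membership criterion $\fracpart{m/\alpha}>1-1/\alpha$ combined with Vinogradov's equidistribution of $\fracpart{p/\alpha}$ over primes for irrational $\alpha$, and Dirichlet's theorem after restricting to a residue class $n\equiv r \pmod b$ with $r$ a modular inverse for rational $\alpha$ (the paper treats negative $\alpha$ by writing $|\floor{n\alpha}|=\floor{n|\alpha|+1}$ rather than via ceilings, a cosmetic difference). One small correction to the bookkeeping you flagged but did not carry out: for negative rational $\alpha$ the ceiling computation does \emph{not} go through unchanged, since with $ra\equiv 1\pmod b$ one gets $b\lceil ra/b\rceil = ra+b-1$, hence $\gcd\left(\lceil ra/b\rceil, a\right)=\gcd(b-1,a)$, which can exceed $1$ (e.g.\ $a=9$, $b=4$, $r=1$ gives $\lceil 9/4\rceil=3$); you must instead choose $r$ with $ra\equiv -1\pmod b$, after which your congruence argument yields $\gcd\left(\lceil ra/b\rceil, a\right)=1$ and Dirichlet applies as before.
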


Our proof works equally well to show that $(|\floor{n\alpha+\gamma}|)_{n=1}^\infty$ contains large primes when $\alpha$ is irrational, but for rational $\alpha$ the conclusion would be false: the sequence \mbox{$(\floor{n\frac{15}{2}+3})_{n=1}^\infty$} contains only one prime.

\begin{proof}
First, observe that the sequence contains {\em all} large positive integers if $0<|\alpha|\leq 1$, so we assume henceforth that $|\alpha|>1$.

First, we further assume that $\alpha$ is irrational and positive. We will show that $(\floor{n\alpha+\gamma})_{n=1}^\infty$ contains arbitrarily large primes. We note the oft-used and elementary criterion~\cite{FraenkelsPartition} that $k\in (\floor{n\alpha+\gamma})_{n=1}^\infty$ if and only if $k\geq\floor{\alpha+\gamma}$ and either $\fracpart{(k-\gamma)/\al}>1-1/\al$ or $(k-\gamma)/\al\in\ZZ$. Thus it suffices for our purposes to show that the sequence of fractional parts $\{p/\alpha\}$ is uniformly distributed, where $p$ goes through the prime numbers. This was shown by Vinogradov~\cite{Vinogradov}*{Chapter XI}.

If $\al$ is irrational and negative, then $|\floor{n\alpha}|=\floor{n|\alpha|+1}$, and this is the case considered in the previous paragraph.

For the remainder of the proof, we assume that $\alpha=q/p$, with $p\ge 2$ and $\gcd(p,q)=1$. In particular,
    \[\floor{n\alpha} = \floor{\frac{ nq }{p}}.\]
It suffices for our purpose to restrict to $n\equiv r \pmod p$, that is, we replace $n$ with $np+r$:
    \[\floor{\frac{(np+r)q}{p}} = nq + \floor{\frac{rq}p}.\]
We have reduced the problem (by Dirichlet's theorem on the infinitude of primes in arithmetic progressions) to choosing $r$ so that
    \(\gcd\left( q ,\floor{rq/p}\right) = 1.\)
Set $r=q^{-1}$, where $q^{-1}$ is the integer in $[2,p+1]$ with $qq^{-1}\equiv 1 \pmod p$; define $u$ through $q q^{-1} = pu +1$, and note that $\gcd(q,u)=1$. We now have
    \[\floor{\frac{rq}p}= \floor{ \frac{ q^{-1}q}p } =\floor{u+\frac 1p}= u,\]
with the last equality being our usage of $p\ge 2$, i.e., the reason we need $\alpha$ to be nonintegral. Since $\gcd(q,u)=1$, we have
    \(\displaystyle \gcd\left( q ,\floor{rq/p}\right) = \gcd\left( q, u \right) = 1.\)
\end{proof}

\begin{proof}[Proof of Theorem~\ref{ProductForm}] We proceed by induction on $d$. The claim is immediate for $d = 1$. Now assume that $d\geq 2$ and that Theorem~\ref{ProductForm} holdes for $d-1$.

Assume without loss of generality that $\al_1\geq \al_2 \geq \cdots \al_d$.
If $\floor{n\al_1} = q$ is prime, then it will show up in the factorization of
$\prod_{i=1}^{d} \floor{n\al_i} = P_n$ as a prime factor $q \geq P_n^{1/d}$
(since $\floor{n\al_1}\geq \floor{n\al_i}$ for all $i$).
Conversely, any prime factor $q$ of $P_n$ which is greater than or equal to $P_n ^{1/d}$ must come from
$\floor{n\al_1}$. Thus, we know the value of $\floor{n\al_1}$ for
infinitely many values of $n$, and so we can determine $\al_1$. Now, by factoring out $\floor{n\al_1}$
from each term $K(\floor{n\bar\al})$, we have reduced the problem to the case of $d-1$ factors. This completes the induction step, and the theorem is proved.
\end{proof}

\subsection{Proof of Theorem~\ref{Skk}}

A $d$-dimensional cube is defined as $Q_a(\bar x) :=\{a+\sum_{j=1}^d \epsilon_j x_j \colon \epsilon_j\in\{0,1\}\}$.
\begin{lemma}
Let $d\in \NN$, and $a,b\in\RR, \bar x, \bar y\in \RR^d$. If $Q=Q_a(\bar x)=Q_b(\bar y)$ and $|Q|=2^d$, then $\{|x_j| \colon 1\le j \le d\} = \{|y_j|\colon 1\le j \le d\}$.
\end{lemma}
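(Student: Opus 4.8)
The plan is to reduce to positive edge vectors, encode the cube as a generating function that factors, and then read off the edge lengths one at a time from the behavior near $z=0$.

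First I would normalize the signs. If some $x_j<0$, then writing $\epsilon_j x_j = (1-\epsilon_j)|x_j| - |x_j|$ and re-indexing $\epsilon_j\mapsto 1-\epsilon_j$ shows $Q_a(\bar x)=Q_{a'}(|\bar x|)$ for $a'=a-\sum_{x_j<0}|x_j|$, and this relabeling is a bijection on sign-vectors; it replaces $\bar x$ by $|\bar x|$ while preserving $Q$. Doing the same for $\bar y$, I may assume $\bar x,\bar y>0$, so the claim becomes $\{x_j : 1\le j\le d\}=\{y_j : 1\le j\le d\}$ as multisets. Note also that $|Q|=2^d$ forces every $x_j\ne 0$ (and likewise $y_j\ne0$): if some $x_j=0$ then flipping $\epsilon_j$ fixes the point, so $|Q|\le 2^{d-1}$.

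The key step is the generating-function identity. For real $z>0$ set $F(z)=\sum_{q\in Q}z^q$ (where $z^q:=e^{q\ln z}$). Distributing $z^a\prod_{j=1}^d(1+z^{x_j})=\sum_{\bar\epsilon\in\{0,1\}^d} z^{a+\sum_j\epsilon_j x_j}$ and using that $|Q|=2^d$ makes $\bar\epsilon\mapsto a+\sum_j\epsilon_j x_j$ a bijection onto $Q$, I get $F(z)=z^a\prod_j(1+z^{x_j})$, and symmetrically $F(z)=z^b\prod_j(1+z^{y_j})$, both for every $z>0$. Letting $z\to0^+$, each factor $1+z^{x_j}\to1$, so $F(z)\sim z^a$ and $F(z)\sim z^b$; hence $a=b$, and dividing by $z^a$ yields $\prod_j(1+z^{x_j})=\prod_j(1+z^{y_j})$ for all $z>0$.

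Finally I would peel off the shortest edge and induct on $d$. With $g(z):=\prod_j(1+z^{x_j})$, expanding gives $g(z)-1=\sum_j z^{x_j}+\sum_{i<j}z^{x_i+x_j}+\cdots$; since $\mu:=\min_j x_j>0$ is strictly smallest and every pairwise exponent is $\ge 2\mu>\mu$, one has $g(z)-1\sim p\,z^{\mu}$ as $z\to0^+$, where $p=\#\{j:x_j=\mu\}$. Running the same computation on the $\bar y$ side forces $\min_j x_j=\min_j y_j=\mu$ with equal multiplicity $p$. Then $(1+z^{\mu})^p$ cancels from both sides, leaving $\prod_{x_j>\mu}(1+z^{x_j})=\prod_{y_j>\mu}(1+z^{y_j})$, an instance with $d-p$ factors on each side, and the induction closes (the base case $d=1$ being immediate). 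I expect the main obstacle to be bookkeeping rather than depth: the whole argument rests on the generating-function identity, which is legitimate only because the hypothesis $|Q|=2^d$ prevents distinct sign-vectors from producing the same point (so the product expands with all coefficients equal to $1$), and the $z\to0^+$ asymptotics require the positivity reduction carried out in the first step.
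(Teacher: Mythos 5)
Your proof is correct, and its overall frame matches the paper's: normalize signs to reduce to positive edge vectors, expand $\sum_{q\in Q}z^q$ as $z^a\prod_j(1+z^{x_j})$ (using that $|Q|=2^d$ makes the sign-vector-to-point map injective, so all coefficients are $1$), and then cancel a common factor from the two products and induct on the number of factors. Where you genuinely differ is the mechanism for finding that common factor. The paper works with complex $z$: it evaluates at $z=\exp(\pi i/X)$, where $X=\max_j x_j$, notes that the left-hand product vanishes there, deduces that some $y_j/X$ is an odd positive integer, and concludes $\max_j x_j=\max_j y_j$ by symmetry. You instead stay on the positive real axis and peel off the \emph{minimum}: from $g(z)-1\sim p\,z^{\mu}$ as $z\to0^+$, with $\mu=\min_j x_j$ and $p$ its multiplicity, you force $\min_j x_j=\min_j y_j$ with equal multiplicities. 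Your route buys two things. First, it is unambiguous for non-integer exponents: $z^{x_j}$ for complex $z$ is multivalued, an issue the paper glosses over with the phrase ``for appropriate complex numbers $z$'' (it is repairable by fixing the principal branch, but you never have to address it, since $z^q=e^{q\ln z}$ on $z>0$ is single-valued). Second, your cancellation of $(1+z^\mu)^p$ handles repeated edge lengths, so you prove the multiset statement without invoking the observation---which the paper makes and quietly relies on---that $|Q|=2^d$ forces the $x_j$ to be pairwise distinct. You also recover $a=b$ from the $z\to0^+$ asymptotics rather than by the paper's explicit normalization $a=b=\min Q$; both are fine. The paper's complex-evaluation step is shorter on the page; yours is the more elementary and self-contained of the two.
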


\begin{proof}
Since $|Q|=2^d$, we know that none of $x_j,y_j$ are 0, and that the $x_j$ are distinct, as are the $y_j$. Further, note that,
    \[Q=Q_a(x_1,\dots,x_d) = Q_{\min Q}(|x_1|,\dots, |x_d|),\]
so that we can assume without loss of generality that $x_j,y_j$ are positive, and that $a=b=\min Q$.

The generating function of $Q$ factors as
    \[f(z)= \sum_{q\in Q} z^q = z^a \prod_{j=1}^d (1+z^{x_j}) = z^a \prod_{j=1}^d (1+z^{y_j}).\]
whence
    \begin{equation}\label{equ:productequal}
         \prod_{j=1}^d (1+z^{x_j}) = \prod_{j=1}^d (1+z^{y_j})
    \end{equation}
for appropriate complex numbers $z$.

We will show by induction on $d$ that such an equality implies that $\{x_j \colon 1\leq j \leq d\}=\{y_j \colon 1\leq j \leq d\}$. This is trivially true for $d=1$. Now assume that it is true for $d-1\geq 1$.

Let $X=\max\{x_1,\dots,x_d\}, Y=\max\{y_1,\dots,y_d\}$. The left hand side of Equ.~\eqref{equ:productequal} vanishes at $z=\exp(\pi i/X)$, and so the right hand side must also vanish, i.e., $1+\exp(\pi i y_j /X)=0$ for some $j$. It follows that $y_j/X = 2k+1$ for some integer $k$, and therefore that for some $j$, $Y\geq y_j \geq X$. Interchanging the roles of $x$ and $y$ yields that some for some $j$, $X\geq x_j \geq Y$, and therefore $X=Y$. We can cancel out the terms on the left and right hand sides of Equ.~\eqref{equ:productequal} corresponding to $X$ and $Y$ (which are the same), and we get a product with $d-1$ factors, completing the inductive step.
\end{proof}

\begin{proof}[Proof of Theorem~\ref{Skk}]
Define
    \[\Delta(n)=\frac{K(\floor{(n+1)\bar\al})-K(\floor{n\bar\al})}{n^{D-1}}.\]
The set $\{\Delta(n)\colon n\in\NN\}$ has limit points (call the set of limit points $\Delta$) which only depend on $S$ and which we can describe in the following manner:
    \[\Delta = \left\{\sum_{i=1}^d [\al_i]\,\frac{\partial S}{\partial x_i} (\bar\al) \colon [\al_i]\in\{\floor{\al_i},\ceiling{\al_i}\}\right\}.\]
We have assumed that $\bar\al$ is irrational to guarantee that all of these expressions arise as limit points, and we assumed that $\al_i$ are algebraically independent to guarantee that all of these expressions correspond to distinct real numbers. We can apply the previous lemma to learn
    \[L_S:=\left\{\left|\frac{\partial S}{\partial x_i} (\bar\al)\right|\right\}.\]

From here, we apply {\em ad hoc} arguments that depend on the special structure of $S$.

If $S(\bar x) = \prod_{i=1}^d x_i$, then we have learned
    \[ L=\{ \al_j^{-1} \prod_{i=1}^d \al_i \colon 1\le j \le d\} .\]
The product of all the elements of this set is just
    \[\left(\prod_{i=1}^d \al_i\right)^{d-1}.\]
As $\bar\al >0$, we can take the $(d-1)$-th root, learning the value of $\prod \al_i$. Dividing $\prod \al_i$ by each element of the set $L$ yields the set
    \[\{\al_j \colon 1\le j \le d\}.\]

If $S(\bar x) = \sum_{i=1}^d x_i^r$, then we have learned
    \[ L=\{r \al_j^{r-1} \colon 1\le j \le d\} \]
Dividing each element of $L$ by $r$ and then taking $(r-1)$-th roots (again using $\bar\al>0$) yields the set
    \[\{\al_j \colon 1\le j \le d\}.\]

If $K(\bar x) = \sum_{i,j=1}^d x_ix_j$, then we have learned
    \[ L=\{ \alpha_i+\sum_{j=1}^d \alpha_j \colon 1\le i \le d\}. \]
The sum of all the elements of this set is just
    \[(d+1) \sum_{j=1}^d \alpha_j.\]
Dividing by $d+1$ yields $\sum \alpha_j$, and subtracting this from each element of $L$ gives the set
    \[\{\al_i \colon 1\le i \le d\}.\]
\end{proof}

\subsection{Rasmussen's Approach to Conjecture~\ref{conj:nested}}\label{sec:Rasmussen}

Our first proof of the $d=2$ case is markedly different from the other proofs of this article. First, we do not assume $\langle \al_1,\al_2 \rangle$ to be
irrational, but $\langle \al_1,\al_1\al_2 \rangle$. Second, the proof is by contradiction and therefore not constructive.

Suppose, by way of contradiction, that
    $$
    s(n)=\floor{\floor{n\al_1}\al_2}=\floor{\floor{n{\beta}_1}{\beta}_2},
    $$
with $\bar\al \not=\bar \beta$, and $\langle \al_1,\al_1\al_2\rangle, \langle {\beta}_1,{\beta}_1{\beta}_2\rangle$ are irrational. Note
    $$
    \al_1\al_2 = \lim_{n\to \infty} \frac{s(n)}{n}={\beta}_1{\beta}_2.
    $$
Suppose without loss of generality that ${\beta}_2<\al_2$ and $\al_1<{\beta}_1$. Since $\langle \al_1,\al_1\al_2 \rangle$ is irrational, there exists an $n$ such that $\fracpart{n\al_1}>\frac{\al_2+{\beta}_2}{2\al_2}$ (note that $\frac{\al_2+{\beta}_2}{2\al_2}<1$ by virtue of the assumption that ${\beta}_2<\al_2$) and ${\beta}_2<\fracpart{n\al_1\al_2}<\frac{\al_2+{\beta}_2}{2}$. But then
    $$s(n)=\floor{\floor{n\al_1}\al_2}=\floor{n\al_1\al_2-\fracpart{n\al_1}\al_2}=\floor{n\al_1\al_2}-1$$
whereas, since $\fracpart{n{\beta}_1{\beta}_2}=\fracpart{n\al_1\al_2}>{\beta}_2>\fracpart{n{\beta}_1}{\beta}_2$,
    $$s(n)=\floor{\floor{n{\beta}_1}{\beta}_2}=\floor{n{\beta}_1{\beta}_2-\fracpart{n{\beta}_1}{\beta}_2}=\floor{n{\beta}_1{\beta}_2}=\floor{n\al_1\al_2}.$$

The method of Rasmussen, which works\footnote{In the $d=3$ case, Rasmussen miswrote the formula for $T_{3,2}$, which erroneously led to a system of equations (using $T_{3,1}$ and $T_{3,2}$) with a unique solution. The analogous system using $T_{3,1}$ and $T_{3,3}$, however, does have a unique solution. We give this minor correction here.} for $d=2$ and $d=3$, might be more amenable to generalization. Define for $\bar{\al} \in \RR^d$
    \[T_{d,k}:= \lim_{N\to\infty} \frac 1N \sum_{n=1}^N \left(n \al_1\cdots \al_d -  \floor{ \cdots \floor{\floor{n\al_1}\,\al_2}\,\cdots \al_d} \right)^k.\]
Using Weyl's Criterion and straightforward integration (with which we trust {\em Mathematica 6.0}), we find that if $\bar\al \in [0,1)^d$ and $\langle \al_1, \al_1\al_2,\ldots, \al_1\al_2\cdots \al_d\rangle\in [0,1)^d$ is irrational, then
    \begin{align*}
    T_{2,1} &= \frac{1+\al_2}{2}, \\
    T_{3,1} &= \frac{1+\al_3+\al_2\al_3}{2}, \\
    T_{3,3} &= \frac 12 T_{3,1} \cdot \left((1+\al_3+\al_3^2)+(\al_3+\al_3^2)\al_2 + (\al_3^2)\al_2^2\right).
    \end{align*}
Since both $P_d:=\prod_{i=1}^d \al_i$ and the $T_{d,k}$ are determined by the sequence
    \[(\floor{ \cdots \floor{\floor{n\al_1}\,\al_2}\,\cdots \al_d})_{n=1}^\infty,\]
so are the $\al_i$: for $d=2$
    \[\al_2 = 2T_{2,1}-1, \qquad \al_1 = P_d/\al_2\]
and for $d=3$
    \begin{align*}
    s &= \text{sgn}\left(4 T_{3,1}^3-2 T_{3,1}^2+T_{3,1}-2 T_{3,3}\right), \\
    \al_2 &= \frac{-4 T_{3,1}^3-T_{3,1}+4 T_{3,3}+s \left(1-2
   T_{3,1}\right) \sqrt{-12 T_{3,1}^4+4 T_{3,1}^3-3
   T_{3,1}^2+8 T_{3,3} T_{3,1}}}{2 \left(4 T_{3,1}^3-2
   T_{3,1}^2+T_{3,1}-2 T_{3,3}\right)}, \\
    \al_3 &= \frac{2 T_{3,1}-1}{1+\al_2},\\
    \al_1 &= \frac{P_3}{\al_2\al_3}.
    \end{align*}

We expect that this approach will work in principle for arbitrarily large $d$, but the practical difficulties in carrying this out are not trivial. Already, we are loathe to check the formula for $T_{3,3}$ and to solve the resulting equations by hand. {\em Mathematica}'s {\tt Solve} command only gives generic solutions, while its {\tt Reduce} command is too slow to handle $d=4$.

The formulas given above for $T_{d,k}$ can be computed using Weyl's criterion: If $\bar\al$ is irrational, then
    \[\frac 1N \sum_{n=1}^N f(\fracpart{n\bar \al}) = \int_{[0,1)^d} f(\bar x) d\bar x.\]
We calculate $T_{3,1}$ as an example. By repeatedly using $\floor{q}=q-\fracpart{q}$ and $\fracpart{q+r}=\fracpart{\fracpart{q}+r}$, we calculate
    \begin{align*}
     \left(n \al_1\al_2 \al_3 \right.  &  \left. -  \floor{\floor{\floor{n\al_1}\,\al_2} \al_3} \right)\\
    &= \fracpart{n\al_1}\al_2\al_3 + \fracpart{\fracpart{n\al_1\al_2}-\fracpart{n\al_1}\al_2}\al_3 \\
    &\qquad\qquad + \fracpart{\fracpart{n\al_1\al_2\al_3} -  \fracpart{n\al_1}\al_2\al_3-\fracpart{\fracpart{n\al_1\al_2} - \fracpart{n\al_1}\al_2}\al_3}\\
    &= x\al_2\al_3 + \fracpart{y - x \al_2}\al_3  + \fracpart{z - x \al_2\al_3-\fracpart{y - x\al_2}\al_3} \\
    \end{align*}
where $\langle x,y,z\rangle = \langle \fracpart{n\al_1},\fracpart{n\al_1\al_2},\fracpart{n\al_1\al_2\al_3}\rangle$. By Weyl's criterion, we get
    \begin{multline*}
    T_{3,2}=\lim_{N\to\infty} \frac1N\sum_{n=1}^N \left(n \al_1\al_2 \al_3 -  \floor{\floor{\floor{n\al_1}\,\al_2} \al_3} \right) \\
        \int_0^1 \int_0^1\int_0^1 x\al_2\al_3 + \fracpart{y - x \al_2}\al_3  + \fracpart{z - x \al_2\al_3-\fracpart{y - x\al_2}\al_3} \,dx\,dy\,dz.
    \end{multline*}
Using $\bar\al \in [0,1)^3$, we can eliminate the fractional parts in the above integral and get
    \[T_{3,2}=\frac13+\frac{1+\al_2}{2}\al_3 + \frac{2+3\al_2+2\al_2^2}{6} \al_3^2.\]
It is clear that this method can yield a formula for $T_{d,k}$ for any $d,k$.

\subsection{Proof of Theorem~\ref{Snested2}}

Let $[x]_0$ be the floor of $x$, and $[x]_1$ be the ceiling. Let
    $$T(W,\bar{\al};n):=[ \dots [[n\al_1]_{w_1}]\al_2]_{w_2}\dots\al_{d}]_{w_{d}},$$
where $W=w_1w_2\dots w_k$ is a word in the alphabet $\{0,1\}$, and $\bar{\al}=\langle \al_1,\al_2,\dots,\al_d\rangle$. In addition to its usual meaning, let ``$<$'' denote the lexicographic ordering on $\{0,1\}^d$. Let $h(W)$ be the Hamming weight of the word $W$, i.e., the number of {\tt 1}s in $W$.

\begin{lemma}\label{lem:Bigalpha}
If $\al_1,\dots,\al_d$ are not integers, with $\al_1>1$ and $\al_i>2$ (for $2\le i \le d$), then
    $$W<V \Leftrightarrow T(W,\bar{\al};1) < T(V,\bar{\al};1).$$
\end{lemma}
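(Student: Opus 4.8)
The plan is to prove only the single implication $W<V\Rightarrow T(W,\bar\al;1)<T(V,\bar\al;1)$; since lexicographic order is a total order on $\{0,1\}^d$, strict monotonicity in one direction forces the full equivalence (if $T(W,\bar\al;1)<T(V,\bar\al;1)$ then neither $W=V$ nor $W>V$ is possible). Write $z_0=1$ and $z_k=z_k(W):=[z_{k-1}\al_k]_{w_k}$ for $1\le k\le d$, so that $T(W,\bar\al;1)=z_d(W)$. The hypotheses $\al_1>1$ and $\al_i>2$ guarantee $z_1\ge\floor{\al_1}\ge 1$ and $z_k\ge\floor{2z_{k-1}}\ge 2z_{k-1}$ for $k\ge 2$, so every $z_k$ is a positive integer; in particular all intermediate quantities are positive, which I will use freely.

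Next I would localize the comparison to the first coordinate where $W$ and $V$ disagree. Let $j$ be the least index with $w_j\ne v_j$, say $w_j=0<1=v_j$ (this is what $W<V$ means). The two computations agree through step $j-1$, so $z_{j-1}(W)=z_{j-1}(V)=:m$, a fixed positive integer, and at step $j$ we get $z_j(W)=\floor{m\al_j}$ while $z_j(V)=\lceil m\al_j\rceil$. Provided $m\al_j\notin\ZZ$, these differ by exactly one: $z_j(V)=z_j(W)+1$. I would then invoke monotonicity of the one-step maps $x\mapsto[x\al_k]_w$ (increasing in $x$ since $\al_k>0$, and $[\cdot]_1\ge[\cdot]_0$ pointwise) together with their composition to conclude that, among all choices of the trailing letters, replacing each remaining letter by $1$ maximizes the outcome while replacing each by $0$ minimizes it. Hence, writing $L(x)$ for the all-floor continuation and $U(x)$ for the all-ceiling continuation through the multipliers $\al_{j+1},\dots,\al_d$, it suffices to prove $L(z_j(V))>U(z_j(W))$, i.e. $L(p+1)>U(p)$ with $p=z_j(W)$, because $T(V,\bar\al;1)\ge L(z_j(V))$ and $T(W,\bar\al;1)\le U(z_j(W))$.

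The heart of the argument is a short induction showing that the one-unit head start created at coordinate $j$ is never lost. Setting $\beta_i=\al_{j+i}$ (each $>2$), $\ell_0=p+1$, $\ell_i=\floor{\ell_{i-1}\beta_i}$ and $u_0=p$, $u_i=\lceil u_{i-1}\beta_i\rceil$, I claim $\ell_i\ge u_i+1$ for every $i$. The base case is $\ell_0=u_0+1$, and if $\ell_{i-1}\ge u_{i-1}+1$ then $\ell_{i-1}\beta_i\ge u_{i-1}\beta_i+\beta_i$, so that $\ell_i>\ell_{i-1}\beta_i-1\ge u_{i-1}\beta_i+\beta_i-1$ while $u_i<u_{i-1}\beta_i+1$; subtracting gives $\ell_i-u_i>\beta_i-2>0$, and integrality upgrades this to $\ell_i\ge u_i+1$. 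Taking $i=d-j$ yields $L(p+1)\ge U(p)+1>U(p)$, which completes the forward implication (the case $j=d$ is the empty continuation, where the claim reduces to $p+1>p$).

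The one genuinely delicate point is the step $z_j(V)=z_j(W)+1$, which requires $m\al_j=z_{j-1}\al_j\notin\ZZ$. I expect this to be the main obstacle: for $j=1$ it is exactly the assumed non-integrality of $\al_1$, but for $j\ge 2$ the non-integrality of $\al_j$ alone does not rule out $z_{j-1}\al_j\in\ZZ$ (which can occur when $\al_j$ is rational with denominator dividing $z_{j-1}$), and in that event $\floor{m\al_j}=\lceil m\al_j\rceil$ and strict monotonicity fails. The clean resolution is to use the irrationality of $\bar\al$ that is in force in the application (Theorem~\ref{Snested2}): since an irrational $\bar\al$ forces each $\al_j$ to be irrational and $z_{j-1}$ is a nonzero integer, one has $z_{j-1}\al_j\notin\ZZ$ for every $j$, so the head start of $1$ is produced at the first disagreement and then propagated by the induction above.
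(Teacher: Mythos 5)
Your argument is correct (under the fix you describe) and takes a genuinely different route from the paper's. The paper proves the forward implication by induction on $d$ with two cases: if $w_1=v_1$, it absorbs the first step into the second multiplier and applies the inductive hypothesis to the $(d-1)$-tuple $\langle [\al_1]_{w_1}\al_2,\al_3,\dots,\al_d\rangle$; if $w_1<v_1$, it applies the inductive hypothesis to the length-$(d-1)$ prefixes and then checks that one final application of $[\,\cdot\,]_{w_d}$ preserves strict inequality because $\al_d>2$ forces $[m\al_d]_{w_d}<[m'\al_d]_{v_d}$ whenever $0<m<m'$. You instead localize at the first index $j$ where $W$ and $V$ disagree, create a head start of $1$ there, bound any continuation between the all-floor and all-ceiling continuations, and propagate the head start with the estimate $\ell_i-u_i>\beta_i-2>0$ plus integrality. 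Your decomposition has the virtue of isolating exactly where non-integrality is used: only at the creation step $z_{j-1}\al_j\notin\ZZ$, never in the propagation.

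More importantly, the ``delicate point'' you flag is a genuine defect of the lemma as stated, not a defect of your argument. Take $\al_1=\al_2=5/2$, which are non-integral with $\al_1>1$, $\al_2>2$, and take $W=00$, $V=01$: then $T(W,\bar\al;1)=\floor{\floor{5/2}\cdot 5/2}=\floor{5}=5$ and $T(V,\bar\al;1)=\lceil\floor{5/2}\cdot 5/2\rceil=\lceil 5\rceil=5$, so $W<V$ but $T(W,\bar\al;1)=T(V,\bar\al;1)$. The paper's own proof silently hits the identical obstacle: in the case $w_1=v_1$, the inductive hypothesis is invoked for a tuple whose first entry is $[\al_1]_{w_1}\al_2$, and nothing in the hypotheses prevents that product from being an integer (as in the example above), at which point the $d=1$ base case fails. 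The correct repair is exactly the one you propose: strengthen the hypothesis to each $\al_i$ irrational (which is implied by the vector irrationality assumed in Theorem~\ref{Snested2}, and is all that theorem needs), so that every product of a positive integer with some $\al_j$ is non-integral. Under that hypothesis both your argument and the paper's induction go through; under the lemma's literal hypothesis, neither does, because the statement itself is false.
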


\begin{proof}
We work by induction on $d$. For $d=1$, the result obviously holds since $\al_1\not\in\ZZ$.

Now assume that $d\geq2$ and that the result holds for {\em all} $(d-1)$-tuples . Assume that
$W<V$. If $w_1=v_1$, then we may apply the induction hypothesis by observing that
    \begin{align*}
    T(W,\langle \al_1,\dots,\al_d \rangle;1)
        &= T(w_2\cdots w_d, \langle [\al_1]_{w_1}\al_2,\al_3,\dots,\al_d \rangle;1)\\
    T(V,\langle \al_1,\dots,\al_d \rangle;1)
        &= T(v_2\cdots v_d, \langle [\al_1]_{w_1}\al_2,\al_3,\dots,\al_d \rangle;1)
    \end{align*}

Thus, we may assume that $w_1<v_1$, and so  $w_1\cdots w_{d-1} < v_1 \cdots v_{d-1}$. Since $\al_d>2$, we
have $[m\al_d]_{w_d}<[m^\prime \al_d]_{v_d}$ whenever $0<m<m^\prime$, and by induction we have
    $$
    m=T(w_1 \cdots w_{d-1},\langle \al_1,\dots,\al_{d-1}\rangle; 1)<T(v_1\cdots v_{d-1},\langle \al_1,\dots,\al_{d-1}\rangle;1)= m^\prime.
    $$
Now, we have
    $
        T(W,\bar{\al}; 1)
            =  [m \al_d]_{w_d}
            <  [m^\prime \al_d]_{v_d}
            =  T(V,\bar{\al}; 1).
    $
\end{proof}

\begin{lemma} \label{lem:BigDeal}
Suppose that $\bar\al,\bar\beta\in\RR^d$ are irrational, and suppose that for any pair $W,V$ of words of length $d$
    \[T(W,\bar{\al};1)<T(V,\bar\al;1) \Leftrightarrow T(W,\bar{\beta};1)<T(V,\bar\beta;1),\]
and further suppose that if $W$ and $V$ have different Hamming weight, then $T(W,\bar{\al};1) \not= T(V,\bar\al;1)$.
If
    \[\forall n \geq 1, \qquad T(0^d,\bar{\al};n)= T(0^d,\bar\beta;n),\]
then $\{\fracpart{\al_i} \colon 1\leq i \leq d\}=\{\fracpart{\beta_i}\colon 1\leq i \leq d\}$.
\end{lemma}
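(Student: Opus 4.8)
The plan is to take first differences of the sequence and recognize them as single-variable evaluations $T(W,\bar\al;1)$. I would first prove the \emph{increment identity}
\[
T(0^d,\bar\al;n+1)-T(0^d,\bar\al;n)=T(W(n),\bar\al;1),
\]
where $W(n)=w_1\cdots w_d\in\{0,1\}^d$ is determined by $n$ and $\bar\al$. Writing $s_i$ for the $i$-th partial nested floor of $n$ and $\delta_i$ for the difference between the $i$-th partial nested floors of $n+1$ and of $n$, the carry recursion gives $\delta_0=1$ and $\delta_i=[\delta_{i-1}\al_i]_{w_i}$, with $w_i=1$ precisely when $\fracpart{s_{i-1}\al_i}+\fracpart{\delta_{i-1}\al_i}\ge 1$; unrolling this yields $\delta_d=T(W(n),\bar\al;1)$.

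Since the hypothesis forces the two increment sequences to coincide, the asymptotic frequency $f_W(\bar\al)$ of each word $W$ among the $W(n)$, grouped by the value $T(W,\bar\al;1)$ it produces, must match what $\bar\beta$ produces. By condition (2) every attained value comes from words of a single Hamming weight, and by condition (1) the preorders that $T(\cdot,\bar\al;1)$ and $T(\cdot,\bar\beta;1)$ induce on $\{0,1\}^d$ are identical; hence the partition of words into equal-value classes, and the ordering of those classes, is the same for $\bar\al$ and $\bar\beta$. Matching the $j$-th smallest value in one sequence to the $j$-th smallest in the other, I would conclude that $\sum_{W\in C}f_W(\bar\al)=\sum_{W\in C}f_W(\bar\beta)$ for every such class $C$.

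Next I would compute these frequencies. The word $W(n)$ is read off from the orbit $(\fracpart{s_0\al_1},\dots,\fracpart{s_{d-1}\al_d})$, and the set of $n$ giving a fixed $W$ is a box whose $i$-th side has length $\fracpart{\delta_{i-1}\al_i}$ when $w_i=1$ and $1-\fracpart{\delta_{i-1}\al_i}$ when $w_i=0$; assuming the orbit equidistributes, $f_W$ is the product of these lengths. Because each $\delta_{i-1}$ equals the partial value $T(w_1\cdots w_{i-1},\bar\al;1)$ and so branches with $W$, the carries are genuinely coupled, and the Hamming-weight distribution is \emph{not} the product $\prod_i\bigl((1-\fracpart{\al_i})+\fracpart{\al_i}z\bigr)$ one might naively hope for; this is exactly why per-weight data is too coarse and the per-class matching above is needed. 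I would then reconstruct the fractional parts by parsing the induced binary tree: sibling leaves are consecutive integers, so the common ordering from condition (1) lets me peel off the node labels $m$ and conditional ratios level by level, the ratio at a level-$i$ node of label $m$ being $\fracpart{m\al_i}$, whence $\al_i=(\floor{m\al_i}+\fracpart{m\al_i})/m$ is recovered; comparing with $\bar\beta$ through the class equality then gives $\{\fracpart{\al_i}\}=\{\fracpart{\beta_i}\}$.

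The hard part will be the equidistribution of the nested-floor orbit $(\fracpart{s_{i-1}\al_i})_i$: this is a generalized-polynomial (nilsequence) statement rather than a direct application of Weyl's criterion, since $s_{i-1}\al_i$ involves the products $\al_1\cdots\al_i$ through floor functions. I expect the bare hypothesis ``$\bar\al$ irrational'' to be insufficient for full equidistribution—one really wants $\langle\al_1,\al_1\al_2,\dots,\al_1\cdots\al_d\rangle$ to be irrational, the very condition appearing in Conjecture~\ref{conj:nested}—so reconciling this with the stated hypothesis, or proving directly the weaker frequency facts actually required, is the crux. A secondary difficulty is that when same-weight coincidences (permitted by condition (2)) make the class-to-word correspondence non-injective, the tree-parse can determine only the multiset, and not the ordered tuple, of fractional parts.
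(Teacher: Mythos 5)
Your skeleton is the same as the paper's: difference the sequence, identify each increment with a value $T(W,\cdot\,;1)$, use hypotheses (1) and (2) to match equal-value classes of words for $\bar\al$ and $\bar\beta$, and recover the fractional parts from asymptotic frequencies. The divergence is at the frequency formula, and there your analysis is the correct one. The paper asserts that the density of $n$ whose increment realizes the word $W$ is the \emph{uncoupled} product $\prod_{i\colon w_i=0}\fracpart{\al_i}\prod_{i\colon w_i=1}(1-\fracpart{\al_i})$, so that the per-weight sums are the coefficients of $P(z)=\prod_i\bigl(\fracpart{\al_i}+(1-\fracpart{\al_i})z\bigr)$ and the $\fracpart{\al_i}$ fall out as roots. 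Your carry recursion shows why this fails for $d\ge2$: the carry at level $i$ occurs exactly when $\fracpart{s_{i-1}\al_i}\ge 1-\fracpart{\delta_{i-1}\al_i}$, where $\delta_{i-1}$ is the prefix-dependent integer increment, so the relevant box side is $\fracpart{\delta_{i-1}\al_i}$, not $\fracpart{\al_i}$; these differ whenever $\delta_{i-1}\neq 1$ (for instance with $\al_1=\sqrt2$, $\al_2=1+\sqrt3$, the level-$2$ carry after a level-$1$ carry is governed by $\fracpart{2\al_2}\approx 0.46$, not $\fracpart{\al_2}\approx 0.73$, as one can check directly on the first few terms of the sequence). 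So you have identified a genuine flaw in the paper's own proof: its polynomial-root extraction rests on a density formula that ignores the coupling of carries.

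That said, your proposal is a plan with two holes that you yourself flag, and both are real. First, the tree parse: peeling off conditional ratios requires \emph{individual} word frequencies, but the observable data gives only sums of frequencies over equal-value classes, and hypothesis (2) permits distinct words of the same Hamming weight to share a value; in that case your parse stalls, and the proposal does not resolve this. (When all $2^d$ values are distinct your idea can be completed cleanly, and without reconstructing $\al_i$ itself: sibling nodes at level $i-1$ carry labels $m$ and $m+1$, so the difference of their two conditional ratios is congruent to $\al_i$ modulo $1$, recovering $\fracpart{\al_i}$; note also that in the application to Theorem~\ref{Snested2}, Lemma~\ref{lem:Bigalpha} makes $T(\cdot,\bar\al;1)$ strictly monotone for the lexicographic order, hence injective, so there the coincidence problem disappears.) Second, the equidistribution of the nested orbit $(\fracpart{s_{i-1}\al_i})_{i=1}^d$ genuinely does not follow from ``$\bar\al$ is irrational'': take $\al_1=2^{1/3}$, $\al_2=2^{5/3}$, which is irrational in the paper's sense and satisfies the range hypotheses of Theorem~\ref{Snested2}, yet $\al_1\al_2=4$ forces $\fracpart{\floor{n\al_1}\al_2}=\fracpart{-\fracpart{n\al_1}\al_2}$, so the orbit lies on a curve rather than filling the square. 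So, as written, neither your argument nor the paper's constitutes a complete proof of the lemma; the value of your write-up is that it isolates precisely the two points (carry coupling, and the strength of the irrationality hypothesis) where the published argument breaks, but it does not yet close them.
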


\begin{proof}
Set $\Delta(n)=T(0^d,\bar\al;n+1)-T(0^d,\bar\al;n)$, and note that
    \[\{\Delta(n):n\in\NN\}=\{T(W,\bar{\al};1)\colon  \len(W)=d\}.\]
In fact, by the irrationality of $\bar\al$, the density of $n$ such that $\Delta(n)=T(w_1\cdots w_d,\bar{\al};1)$ is
    $$V_W(\bar \al) = \prod_{\substack{i=1\\ w_i=0}}^{d} \fracpart{\al_i} \prod_{\substack{i=1\\w_i=1}}^{d} (1-\fracpart{\al_i}).$$
While for any particular $W$ it is possible that $V_W(\bar\al)\not=V_W(\bar\beta)$, the condition on the ordering of $T(W,\bar\al;1),T(W,\bar\beta;1)$ guarantees the set equalities for $1\le i \le d$:
    \[ \bigg\{ V_W(\bar\al) \colon h(W)=i\bigg\} =  \bigg\{ V_W(\bar\beta) \colon h(W)=i\bigg\}.\]
Thus the polynomial
    \begin{align*}
    P(z)=\prod_{i=1}^{d} \bigg(\fracpart{\al_i}  +(1-\fracpart{\al_i}) z \bigg)
            &= \sum_{I\subseteq\{1,\dots,d\}}\,\prod_{i\in I} \fracpart{\al_i}\,\prod_{i\in\{1,\dots,d\}\setminus I} (1-\fracpart{\al_i}) z \\
            &= \sum_{\substack{W \\ \len(W)=d}}\,\left( V_{W}(\bar\al)\,\prod_{\substack{i=1\\w_i=0}}^{d} 1 \prod_{\substack{i=1\\w_i=1}}^{d} z\right) \\
            &= \sum_{\substack{W \\ \len(W)=d}}\, V_{W}(\bar\al)\,z^{h(W)} \\
            &= \sum_{i=0}^d \left( \sum_{\substack{W \\ \len(W)=d,\, h(W)=i}} V_W(\bar\al) \right) z^{d},
    \end{align*}
is determined by the sequence. Therefore, the set of its roots $- \frac{\fracpart{\al_i}}{1-\fracpart{\al_i}}$ is also determined by the sequence. Since $x\mapsto -\frac{1-x}{x}$ is a $1$-$1$ map, this implies that the set $\{\fracpart{\al_1},\dots,\fracpart{\al_d}\}$ is determined from the sequence, concluding the proof.
\end{proof}

\begin{proof}[Proof of Theorem~\ref{Snested2}]
Combine Lemmas~\ref{lem:Bigalpha} and~\ref{lem:BigDeal}.
\end{proof}

\section{Open questions concerning generalized polynomials}

The meta-issue is to find an efficient algorithm that will determine whether a generalized polynomial with algebraic coefficients is identically zero on the positive integers. Humble first steps in this direction would be to completely answer the problems implied in Concrete Mathematics~\cite{CM}:
\begin{problem}
Find a necessary and sufficient condition on the real numbers $\al_i,\beta_j\in[0,1)$ such that for all positive integers $n$
    \[\sum_{i=1}^d \floor{n\alpha_i} = \sum_{j=1}^\ell \floor{n\beta_j}.\]
\end{problem}
We suspect that this equality happens only if $d=\ell$ and for some $a,b,c,d$, $\al_a+\al_b=\beta_c+\beta_d =1$, and that this (and trivial solutions) are the only way that equality can occur.
\begin{problem}
Find a necessary and sufficient condition on the real numbers $\al_i,\beta_j\in\RR$ such that for all positive integers $n$
    \[\floor{ \cdots \floor{\floor{n\al_1}\,\al_2}\,\cdots \al_d} = \floor{ \cdots \floor{\floor{n\beta_1}\,\beta_2}\,\cdots \beta_\ell} .\]
\end{problem}
There are very many solutions in rationals, and we do not have a guess as to their structure.

Both problems are obvious if all $\al,\beta$ are taken to be integers, and both are answered here if $d=\ell$ and the $\al,\beta$ are taken to be sufficiently irrational. The most difficult case to understand, for both questions, seems to be when the $\al_,\beta$ are all rational, but not all integral.

\section*{Acknowledgements}
We thank Inger H{\aa}land Knutson for helpful comments and nice examples.

\begin{bibdiv}
\begin{biblist}
\bib{MR2318563}{article}{
   author={Bergelson, Vitaly},
   author={Leibman, Alexander},
   title={Distribution of values of bounded generalized polynomials},
   journal={Acta Math.},
   volume={198},
   date={2007},
   number={2},
   pages={155--230},
   issn={0001-5962},
   review={\MRnumber{2318563}},
}
\bib{CM}{book}{
   author={Graham, Ronald L.},
   author={Knuth, Donald E.},
   author={Patashnik, Oren},
   title={Concrete mathematics},
   edition={2},
   note={A foundation for computer science},
   publisher={Addison-Wesley Publishing Company},
   place={Reading, MA},
   date={1994},
   pages={xiv+657},
   isbn={0-201-55802-5},
   review={\MRnumber{1397498}},
}

\bib{MR2246589}{article}{
   author={Bergelson, Vitaly},
   author={H{\aa}land Knutson, Inger J.},
   author={McCutcheon, Randall},
   title={IP-systems, generalized polynomials and recurrence},
   journal={Ergodic Theory Dynam. Systems},
   volume={26},
   date={2006},
   number={4},
   pages={999--1019},
   issn={0143-3857},
   review={\MRnumber{2246589}},
}
\bib{MR1292518}{article}{
   author={H{\aa}land, Inger Johanne},
   title={Uniform distribution of generalized polynomials of the product type},
   journal={Acta Arith.},
   volume={67},
   date={1994},
   number={1},
   pages={13--27},
   issn={0065-1036},
   review={\MRnumber{1292518}},
}
\bib{FraenkelsPartition}{article}{
   author={O'Bryant, Kevin},
   title={Fraenkel's partition and Brown's decomposition},
   journal={Integers},
   volume={3},
   date={2003},
   pages={A11, 17 pp. (electronic)},
   issn={1553-1732},
   review={\MRnumber{2006610}},
}

\bib{Rasmussen}{article}{
    author={Rasmussen, Kenneth Valbj{\o}rn},
     title={\href{http://www.math.ku.dk/famos/arkiv/18-2/kenneth_formidling.pdf}{Ligefordelte f{\o}lger i $[0,1]^k$ med anvendelser}},
   journal={FAM{\o}S},
    volume={18},
      date={2004},
    number={2},
     pages={35\ndash42},
}

\bib{Vinogradov}{book}{
   author={Vinogradov, I. M.},
   title={The method of trigonometrical sums in the theory of numbers},
   note={Translated from the Russian, revised and annotated by K. F. Roth
   and Anne Davenport;
   Reprint of the 1954 translation},
   publisher={Dover Publications Inc.},
   place={Mineola, NY},
   date={2004},
   pages={x+180},
   isbn={0-486-43878-3},
   review={\MRnumber{2104806}},
}

\end{biblist}
\end{bibdiv}

\end{document}